\definecolor{bleu}{rgb}{0.00,0.4,0.90}
\definecolor{magenta}{rgb}{1.0, 0.0, 1.0}
\long\def\unmarkedfootnote#1{{\long\def\@makefntext##1{##1}\footnotetext{#1}}}
\newcommand{\dx}{\,\mathrm d }
\newcommand{\reale}{{\mathbb R}}
\newcommand{\naturale}{{\mathbb N}}
\newcommand{\Rn}{\reale^N}
\newcommand{\Sob}[2]{W^{1,#1}_0\def\next{#2}\ifx\next\empty\else(#2)\fi}
\newcommand{\SobpO}{\Sob{\overrightarrow{p}}{\Omega}}
\newcommand{\Sobdual}[1]{W^{-1,p'}\def\next{#1}\ifx\next\empty\else(#1)\fi}
\newcommand{\nc}{\normalcolor}
\newtheorem{definition}{Definition}[section]
\newtheorem{lemma}[definition]{Lemma}
\newtheorem{theorem}[definition]{Theorem}
\newtheorem{proposition}[definition]{Proposition}
\newtheorem{corollary}[definition]{Corollary}
\newtheorem{remark}[definition]{Remark}
\newcommand{\Om}{\Omega}
\newcommand{\R}{\mathbb R}
\newcommand{\de}{\partial}
\newcommand{\RN}{{\mathbb R}^{N}}
\newcommand{\Si}{\sum_{i=1}^N}
\newcommand{\dei}{\partial_{x_i}}
\newcommand{\ee}{\varepsilon}
\newenvironment{proof}[1][Proof]{\textbf{#1.} }{\ \rule{0.5em}{0.5em}}
\begin{document}

\title{Existence and uniqueness of solutions to some anisotropic  elliptic equations with a singular convection term }

\author{ G. di
Blasio\thanks{Dipartimento di Matematica e Fisica,
Universit\`{a} degli Studi della Campania \textquotedblleft L. Vanvitelli\textquotedblright , Viale Lincoln, 5 - 81100
Caserta, Italy. E--mail: giuseppina.diblasio@unicampania.it} -- F.
Feo\thanks{Dipartimento di Ingegneria, Universit\`{a} degli Studi di
Napoli \textquotedblleft Pathenope\textquotedblright, Centro
Direzionale Isola C4 80143 Napoli, Italy. E--mail:
filomena.feo@uniparthenope.it}
-- G. Zecca \thanks{
Dipartimento di Matematica e Applicazioni \textquotedblleft R. Caccioppoli\textquotedblright,  Universit\`{a} degli Studi di Napoli Federico II,
Complesso Universitario  Monte S. Angelo, Via Cintia 26, Napoli, Italy. E--mail: g.zecca@unina.it }}


\maketitle
\begin{abstract}
We prove the existence and uniqueness of weak solutions to a class of anisotropic elliptic equations with coefficients of  convection term belonging to some suitable Marcinkiewicz spaces. Some useful a priori estimates and regularity results are also derived.
\end{abstract}

\bigskip

\footnotetext{\noindent\textit{Mathematics Subject Classifications: 35J25, 35B45}
\par
\noindent\textit{Key words: Noncoercive Anisotropic nonlinear equations, Lower order terms, Existence, Uniqueness, Regularity }}

\numberwithin{equation}{section}

\section{\textbf{Introduction}}

In this paper we obtain existence and uniqueness results for the weak solutions of  the following class of Dirichlet problems

\begin{equation}\label{P}
\left\{
\begin{array}[c]{ll}%
-\sum_{i=1}^N   \de_{x_i} \left [ \mathcal{A}_i(x,\nabla u)+\mathcal B_i(x,u)    \right]+ \mathcal G(x,u )
=    \mathcal{F}  &\hbox { in } \Omega,
\\
& \\
u=0  & \hbox{ on } \partial \Omega,
\end{array}
\right.
\end{equation}
where $\Omega $  is a bounded domain of $ \RN$ with Lipschitz boundary, $N>2$,  $p_i>1$ for every $i=1,...,N$ with
$ \bar p<N $, denoting by $\overline{p}$ the harmonic mean of $\vec{p}=
(p_1,\cdots,p_N)$, \textit{i.e.}
\begin{equation}\label{p bar}
\frac{1}{\overline{p}}=\frac{1}{N}\sum_{i=1}^N\frac{1}{p_i}.
\end{equation}

\noindent Throughout this paper, we make the following assumptions for any $i=1,...,N,$
\bigskip

\noindent
$ (\mathcal H 1)\quad \mathcal A_i: \Om \times \mathbb R^N \rightarrow \R $ is a Carath\'eodory function
that satisfies
\begin{eqnarray}
\label{ii1}&&| \mathcal A_i (x, \xi ) | \leqslant \beta_i  | \xi_i|^{p_i-1}\\
\label{ii2}&&  \alpha  |\xi_i|^{p_i}\leqslant   \mathcal A_i(x, \xi )\, \xi_i\\
\label{ii3}&&  0  <    ( \mathcal A_i(x, \xi )- \mathcal A_i(x, \eta ) )\, (\xi_i -\eta_i ) \quad\,\,\xi\neq \eta
\end{eqnarray}

\noindent for a.e. $x \in \Om$ and for any vector $\xi,\eta$  in $\mathbb R^N$, where $0<\alpha\leqslant \beta_i$ are constants.
\bigskip

\noindent
$(\mathcal H 2) \quad \mathcal B_i:\Om \times \mathbb R \rightarrow \mathbb R $ is a Carath\'eodory  function such that
\begin{equation}\label{b}
|\mathcal B_i(x,s)| \leqslant b_i(x)|s|^{\frac{\bar p}{p_i'}},
\end{equation}
for a.e. $x\in \Om$ and for every $s\in \mathbb R$, with $b_i: \,\Om\to [0,+\infty)$ measurable function such that
\begin{equation}\label{bi}
 b_i\in L^{\frac{N\,p'_i} {\bar p},\infty}(\Om).
\end{equation}

\noindent $(\mathcal H 3) $  $ \mathcal G: \Om \times \mathbb R \rightarrow \mathbb R $ is a Carath\'eodory  function such that
\begin{equation}\label{b1}
|\mathcal G(x,s)| \leqslant \tilde{\mu} |s|^{\gamma}
\end{equation}
with $1\leq\gamma< p_\infty-1$ and
\begin{equation}\label{sign}
\mathcal G(x,s)\, s\geq0
\end{equation}
for a.e. $x\in \Om$ and for every $s\in \mathbb R$, where $\tilde{\mu}$ is a non negative  constant and  $p_\infty=\max\{\overline{p}^*,p_{\max}\}$,  with $p_{\max}= \max_i p_i$.

\medskip

\noindent
$(\mathcal H 4) $ $\mathcal{F}$ belongs to the dual space $ (W_0^{1,\vec p}(\Om))^*$, where $W_0^{1,\vec p}(\Om)$ is the anisotropic Sobolev space defined in Section \ref{preliminari}.

\medskip

We observe that in our assumptions, the following definition of weak solution is well posed.

\begin{definition}
For any  $\mathcal{F}\in (W_0^{1,\vec p}(\Om))^*$ we say that
 $u\in W_0^{1,\vec {p} }(\Omega )$ is a weak solution to  \eqref{P} provided
 \begin{equation}\label{sol1}
\int_\Om   \left[ \Si  \left[\mathcal{A}_i(x,\nabla u)+\mathcal B_i(x,u)    \right] \dei \varphi  + \mathcal G (x,u )  \varphi \right]  \, dx = \langle \mathcal{F}\, , \varphi \rangle
\end{equation}
$\forall \varphi\in C^\infty_0(\Om)$, where $\langle \cdot, \cdot \rangle$ denotes the duality product of $(W_0^{1,\vec p}(\Om))^*$ and $W_0^{1,\vec p}(\Om)$.
\end{definition}

\medskip

In the anisotropic framework  the natural space where looking for weak solutions of Dirichlet problem \eqref{P} is the anisotropic Sobolev space $W_0^{1,\overrightarrow{p}}(\Omega)$  (see $\S$ \ref{preliminari} for definition). When the harmonic mean $\bar p$, defined in \eqref{p bar}, is less then the dimension $N$, it is well-known (see $\S$ \ref{preliminari}) that $W_0^{1,\overrightarrow{p}}(\Omega)$ is continuously embedded in the Lorentz space $L^{\bar p^*,\bar p}(\Omega)$.  On the other hand, by Poincar\'{e} inequality (see \eqref{dis poincare} below) the space $W_0^{1,\overrightarrow{p}}(\Omega)$ is embedded in the Lebesgue space $L^{p_{\max}}(\Omega)$. This suggests us to link, as in our assumption  \eqref{b1}, the growths of zero order term with   $p_\infty$, which  takes over how the $p_i$ are spread out.

The prototype equation of  our class of problems \eqref{P} is
\begin{equation}\label{model 1}
-\sum_{i=1}^N \de_{x_i}  \left[|\de_{x_i}u|^{p_i-2}\de_{x_i}u+\beta_i(x)|u|^{ \frac{\bar{p} }{p_i'}-1}u\right] +\tilde{\mu} |u|^{\gamma-1}u =    \mathcal{F}  \qquad \mbox { in } \Omega,
\end{equation}
where $p_i\geq1, \bar p<N,\tilde{\mu} \geq0,1\leq\gamma<p_\infty -1,\beta_i\in L^{\frac{N\,p'_i}{\bar p},\infty}(\Om)$ and $\mathcal{F}$ belongs to the dual space. We stress that when $\bar p^*\geq p_{\max}$ using the Sobolev  embedding in the Lorentz space $L^{\bar p^*,\bar p}(\Omega)$  the  summability assumption on $\beta_i$ is optimal to assure $|\beta_i|^{p'_i}|u|^{\bar p}\in L^1(\Omega)$. Moreover we observe that when $p_i=p$, $\beta_i=0$ and $\widetilde{\mu}=0$, the principal part in  equation \eqref{model 1} becomes the so-called pseudo-Laplacian operator (see \cite{L} pp. 106 and 155) or orthotropic $p$-Laplacian operator (see \cite{BB}), extensively studied in the literature.

Let us point out that term anisotropy is used in various scientific disciplines and could have a different meaning when it is related to equations as well. The interest in anisotropic problems has deeply increased in the last years since their many applications in mathematical modelling for natural phenomena in biology and fluid mechanics. For example, they are related to the mathematical description of the dynamics
of fluids in anisotropic media when the conductivities of the media are different in different directions (see e.g. \cite{ADS}) and they also appear in biology as a model for the propagation of epidemic diseases in heterogeneous domains
(see e.g. \cite{BK}). On anisotropic problems many results in different directions have been obtained, here we quote a list of references that is obviously not exhaustive and we refer the reader to references therein to extend it: \cite{ACC,AdBF1,AC,BGM,BB,BaCri,Barbara,C2,GGG,FVV,P,Dic1,Fragala,LI,Str}.

 A goal of this paper is to analyze the existence of weak solutions for this class of problems (see \cite{FGMZ} for the isotropic case). 
  Since under our assumptions  the coercivity for the involved operator in problem \eqref{P} is not guaranteed, we will proceed as usual by approximations.  The existence of weak solutions can be expected when the datum and the coefficients are smooth enough. Then we first consider problem with $b_i\in L^\infty(\Omega)$ and then we reduce to the general case $b_i\in L^{\frac{N\,p'_i} {\bar p},\infty}(\Omega)$ assuming a control on suitable distance of $b_i$ from $L^\infty(\Omega)$ (see  assumption \eqref{hpdistmain} in Theorem \ref{TH E1}). This strategy allows us to overcome that the norms in  the Marcinkiewicz $L^{\frac{N\,p'_i} {\bar p},\infty}$ are not absolutely continuous and that $L^\infty(\Omega)$  is not dense in Marcinkiewicz spaces.
 Finally when we pass to the limit in the approximated problems  we have to deal with an extra difficulty due to the fact that in our assumption
   the operator $u\in W_0^{1,\overrightarrow{p}}(\Omega)\rightarrow {b_i}(x) |u|^{\frac{\bar p}{p'_i}-1}u\in L^{p'_i}(\Omega)$ is not compact in general. We emphasize that our assumption on the distance \eqref{hpdistmain}, firstly considered  in \cite{GGM} in the isotropic case (see also \cite{GMZ1}, \cite{FGMZ}), is weaker than asking  the smallness of the norms of $b_i$  which is the standard approach to treat the presence of the first order term.

We also analyze the regularity for weak solutions of problem \eqref{P} and we obtain Stampacchia type regularity, extending \cite{BMS, Str, Dic1} where the case $\mathcal B\equiv 0$ is studied. Regularity results for local solutions of problem \eqref{P} has been recently obtained in \cite{nostro}.

\medskip

For what concerns the uniqueness (see Theorems \ref{THU1}, \ref{THU2} and \ref{THU3}) we emphasize that our proofs strongly uses the presence of the zero order term and his monotonicity assumption \eqref{cond G} below. In order to describe our result, let us reduce
ourselves to the model case \eqref{model 1} for simplicity.  The main difficulty is due to the presence of the lower order terms that are only H\"{o}lder continuous, but not Lipschitz continuous with respect to  solution $u$ when $p_i<2$ for all $i$.
We recall  \cite{BP} for equations with a  H\"{o}lder continuous dependence on the solution of the coefficient in the main part of the operator.

If one wants to neglect the term $\mathcal G$, Corollary \ref{Cor unic} gives a partial uniqueness result when the datum is zero (see \cite{B2012}  for isotropic case).
Otherwise also in the isotropic case the uniqueness is proved in \cite{B2012} requiring  a control of the partial derivative of first order term $B(x,u)$ with respect to $u$, which is not satisfied in the simple case $B(x,u)=\textbf{b}(x)|u|^{p-2}u$ with  $\textbf{b}\in \left(L^{\frac{N}{p-1}}(\Omega)\right)^N$ .

\medskip

The paper is organized as follows. Section 2 contains some preliminaries and in Section 3 we prove some useful a  priori estimates. In our strategy we need some $L^\infty$ estimate of $u$ as well. For completeness we write Section 4 to analyze the regularity of the solutions. The existence theorem is stated and proved in Section 5. The last section is devoted to the uniqueness results.

\section{Preliminaries}\label{preliminari}

In the present section we recall some known function spaces, useful  in the sequel.

We start by recalling definitions of Lorentz spaces and their properties (see \cite{PKOS}  for more details).

Here we  assume that $\Om\subset \RN$, $N>2$ is an open set. Given $1<p,q<+\infty$, the Lorentz space $L^{p,q}(\Om)$ consists of all measurable functions $f$ defined on $\Omega$ for which the quantity
\begin{equation}\label{norma Lorentz}
\|f\|_{p,q}^q=p\int_{0}^{+\infty} |\Om_t|^{\frac{q}{p}}t^{q-1}d t
\end{equation}
is finite, where $\Om_t= \left\{ x\in \Om: |f(x)|>t \right\}$ and $|\Om_t|$ is the Lebesgue measure of $\Om_t$, that is, $\mu_f(t)=|\Om_t|$ is the distribution function of $f$. Note that $\|\cdot\|_{p,q}$  is equivalent to a norm and $L^{p,q}(\Omega)$ becomes a Banach space when endowed with it. For $p=q$, the Lorentz space
$L^{p,p}(\Omega)$ reduces to the Lebesgue space $L^p(\Omega)$. For $q=\infty$, the class $L^{p,\infty}(\Omega)$ consists of all measurable functions $f$ defined on $\Omega$ such that
\begin{equation}\label{2.1}
\|f\|^p_{p,\infty}=\sup_{t>0}t^{ p}\mu_f(t)<+\infty
\end{equation}
and it coincides with the Marcinkiewicz class or the so-called weak-$L^p(\Omega)$.

It well-known that if $\Omega$ is bounded, the following inclusions hold
\begin{equation}\label{LorLebEmb}
L^r (\Om)\subset L^{p,q}(\Om)\subset  L^{p,r} (\Om) \subset L^{p,\infty}(\Om)\subset L^q(\Om),
\end{equation}
whenever $1\leqslant q<p<r\leqslant \infty.$ Moreover, for $1<p<\infty$, $1\leqslant q\leqslant \infty$ and $\frac 1 p+\frac 1 {p'}=1$, $\frac 1 q+\frac 1 {q'}=1$, if $f\in L^{p,q}(\Omega)$, $g\in L^{p',q'}(\Omega)$ we have the H\"{o}lder--type inequality
\begin{equation*}\label{holder}
\int_{\Omega}|f(x)g(x)|dx \leqslant \|f\|_{p,q}\|g\|_{p',q'}.
\end{equation*}

\bigskip

 We remark that $L^\infty(\Om)$ is not dense in $L^{p,\infty}(\Om)$ for $p\in{}]1, +\infty[$. Therefore it is possible to define the distance of a given  function $f\in L^{p,\infty}(\Om)$ to $  L^\infty(\Om)$ as
\begin{equation}\label{dist_infty}
{\rm{dist}}_{L^{p,\infty} (\Om) }(f,L^\infty(\Om))=\inf_{g\in L^\infty(\Om)} \|f-g\|_{L^{p,\infty}(\Om)}.
\end{equation}
Note that, since $\|~\|_{p,\infty}$ is not a norm, ${\rm{dist}}_{L^{p,\infty} (\Om)}$ is just equivalent to a metric. In \cite{CS} is proved that
\begin{equation}\label{distlim}
{\rm{dist}}_{ L^{p,\infty}(\Om)  }(f, L^\infty(\Om))=\lim_{M \to +\infty}\|f-T_M f\|_{L^{p,\infty}(\Om)},
\end{equation}
where the truncation at level $M>0$ is defined as
\begin{equation}\label{T}
T_M  \,y=\frac{y}{|y|}\min\{|y|, M\}.
\end{equation}

Now,
let  $\vec{p}= (p_1,p_2,...,p_N)$ with $p_i>1$ for $i=1,...,N$, and let $\Omega$ be a bounded open subset of $\Rn$. As usual the anisotropic Sobolev space is the Banach space defined as

\[
W^{1,\vec{p}}(\Om)= \{ u\in W^{1,1} (\Om) : \de_{x_i} u\in L^{p_i} (\Om),i=1,...,N\}
\]
equipped with \[
\|u\|_{W^{1,\vec{p}}(\Om)} = \|u\|_{L^{1}(\Om)} +\sum_{i=1}^N  \|\de_{x_i} u\|_{L^{p_i}(\Om)}.
\]

It is well-known that in the anisotropic setting a Poincar\`e type
inequality holds true (see \cite{Fragala}). Indeed for every $u\in C_0^{\infty}(\Om)$ with $\Omega$ a bounded open set with Lipschitz boundary we have
\begin{equation}\label{dis poincare}
\|u\|_{L^{p_i}(\Om)} \leq C_P  \|\de_{x_i} u\|_{L^{p_i}(\Om)},\qquad \qquad i=1,...,N
\end{equation}
for a constant $C_P$ proportional to the width of $\Omega$ in the direction of $e_i$ and then to the diameter of $\Omega$. Moreover, for $u\in C_0^{\infty}(\mathbb{R^N})$ the following anisotropic Sobolev inequality holds true (see \cite{Ta})
\begin{equation}\label{Sob}
\|u\|_{L^{p,q}(\mathbb{R^N})} \le S_N\prod_{i=1}^N \| \de_{x_i} u\|_{L^{p_i}(\mathbb{R^N})}^{\frac 1N},
\end{equation}
where $S_N$ is an universal constant and  $p=\bar{p}^*$
and $q=\bar{p}$ whenever $\bar{p}<N$, where $\bar p$ is defined in \eqref{p bar} and
\begin{equation}\label{p star}
\bar{p}^*=\frac{N\bar{p}}{N-\bar p}.
\end{equation}
Using the inequality between geometric and arithmetic mean we can replace the right-hand-side of \eqref{Sob} with $ \sum_{i=1}^N \| \de_{x_i} u\|_{L^{p_i} }$.
 In \cite{nostro} is proved a generalization of \eqref{Sob} to product of function. We recall an its simpler form that we will need in the following
\begin{equation}\label{Sob prod}
\left\|u\right\|_{L^{\bar p^*,\bar p}(\Omega)}\leq \widetilde{S}_N
\left\|\left(\prod_{i=1}^{N}\left|\de_{x_i} u\right|\right)^{1/N}\right\|_{L^{\overline{p}}(\Omega)},
\end{equation}
for a suitable universal constant $\widetilde{S}_N$.

When $\overline{p}<N$ and $\Omega$ is a bounded open set with Lipschitz boundary, {the space $W_{0}^{1,\overrightarrow{p}}(\Omega)= \overline{ C_0^\infty (\Om)}^{\sum_{i=1}^N \| \de_{x_i} u\|_{L^{p_i} }}$ is continued embedding into $ L^{q}(\Omega)$ for $q\in\lbrack1,p_{\infty}]$, with
$p_{\infty}:=\max\{\overline{p}^{\ast}, \max_ip_i \}$ as a consequence of \eqref{Sob} and \eqref{dis poincare}}.

\medskip

We recall the following useful lemma.

\begin{lemma}\label{Lemma tetai}\emph{(}see \cite[page 43]{KPS}\emph{)} Let $X$ be a rearrangement invariant space and let $
0\leq \theta _{i}\leq 1$ for $i=1,...,M,$ such that $\sum_{i=1}^{M}\theta _{i}=1$, then
\begin{equation*}
\left\| \prod_{i=1}^{M}|f_{i}|^{\theta _{i}}\right\| _{X}\leq
\prod_{i=1}^{M}\Vert f_{i}\Vert _{X}^{\theta _{i}} \quad \forall f_i\in X.  \label{holder
X}
\end{equation*}
\end{lemma}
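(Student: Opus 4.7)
The plan is to reduce the multiplicative inequality to an additive one via the weighted arithmetic--geometric mean inequality, and then exploit only two properties of the space $X$: the triangle inequality of its norm and the lattice property (i.e., $0\le f\le g$ implies $\|f\|_X\le\|g\|_X$, which every r.i.\ space enjoys).

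First I would observe that for any positive numbers $\lambda_1,\dots,\lambda_M$, the weighted AM--GM inequality applied to $a_i=|f_i(x)|/\lambda_i$ yields, pointwise a.e.\ in $\Omega$,
\begin{equation*}
\prod_{i=1}^M |f_i(x)|^{\theta_i}
\;=\;\Bigl(\prod_{i=1}^M \lambda_i^{\theta_i}\Bigr)\prod_{i=1}^M\!\Bigl(\tfrac{|f_i(x)|}{\lambda_i}\Bigr)^{\theta_i}
\;\le\;\Bigl(\prod_{i=1}^M \lambda_i^{\theta_i}\Bigr)\sum_{i=1}^M \theta_i\,\tfrac{|f_i(x)|}{\lambda_i}.
\end{equation*}

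Next I would take the $X$--norm on both sides, using the lattice property to preserve the inequality and then the triangle inequality together with positive homogeneity on the right:
\begin{equation*}
\Bigl\|\prod_{i=1}^M |f_i|^{\theta_i}\Bigr\|_X
\;\le\;\Bigl(\prod_{i=1}^M \lambda_i^{\theta_i}\Bigr)\sum_{i=1}^M \frac{\theta_i}{\lambda_i}\,\|f_i\|_X.
\end{equation*}
At this stage the inequality is valid for every choice of the free parameters $\lambda_i>0$, so it remains to optimize. The natural choice $\lambda_i=\|f_i\|_X$ makes the sum on the right collapse to $\sum_i \theta_i=1$, leaving precisely
$\prod_i \|f_i\|_X^{\theta_i}$ on the right-hand side, which is the claim.

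The only genuinely nontrivial point is the degenerate case in which some $\|f_i\|_X=0$; this forces $f_i=0$ a.e., hence the left-hand side vanishes (using the convention $0^0=1$ if $\theta_i=0$) and the inequality is trivial. I expect no real obstacle: the argument only uses the norm axioms of a Banach function lattice, so the rearrangement invariance hypothesis is actually stronger than needed, which is reassuring rather than troublesome.
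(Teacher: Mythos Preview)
Your argument is correct. The paper does not actually supply a proof of this lemma: it simply records the statement and cites \cite[page~43]{KPS}. Your proposal therefore goes beyond what the paper does by providing a complete, self-contained proof. The route you take --- pointwise weighted AM--GM followed by the lattice and triangle inequalities, then the optimization $\lambda_i=\|f_i\|_X$ --- is the standard one, and your remark that only the Banach-lattice structure (not the full rearrangement invariance) is needed is accurate. The handling of the degenerate case $\|f_i\|_X=0$ is also fine.
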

\nc

\

\section{An useful a priori estimates}

In this  section \nc  we suppose that a weak solution $u$ \nc of the problem \eqref{P} exists and we prove the following a priori estimate under suitable assumptions on ${\rm{dist}}_{L^{\frac{N p'_i }{\bar p},\infty}(\Om)}(b_i, L^\infty(\Om))$, defined in \eqref{dist_infty}, for $i=1,\cdots,N$.

Since $\mathcal{F}$  belongs to $(W_0^{1,\vec{p}}(\Om))^*$, in what follows we can write it as

\begin{equation}\label{F}
\mathcal{F}=-\sum_{i=1}^N(f_i)_{x_i}, \mbox{ with }  f_i \in L^{p_i'}(\Omega)  \,\,\forall  i=1,...,N.
\end{equation}

\begin{lemma}\label{Lemma 1}Let us assume that  $\Omega$ is a  bounded Lipschitz domain, $p_{i}>1$ for $i=1,..,N$, $\bar{p}<N$, $(\mathcal H 1)-(\mathcal{H}4)$ are in force and let $u\in W_0^{1,\vec{p}}(\Om)$ be a weak solution of \eqref{P}. Then there exists a positive constant $d=d(N,\alpha,\vec{p})$ such that whenever
\begin{equation}\label{hpdistmain}
\max_i\left\{{ \emph{dist}}_{L^{\frac{N p'_i }{\bar p},\infty}(\Om)} (b_i, L^\infty(\Om))\right\} <d,
\end{equation}
the following uniform estimate holds
\begin{equation}\label{stima a priori1}
\sum_{i=1}^N\int_{\Omega}|u_{x_i}|^{p_i}dx\leq C,
\end{equation}
 where $C=C (\alpha,N,\overrightarrow{p},d,\|\mathcal{F}\|_{(W_0^{1,\vec{p}}(\Om))^*}).$
 \end{lemma}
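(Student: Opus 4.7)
The plan is to test the weak formulation \eqref{sol1} with $\varphi=u$ itself (legal by a density argument since $u\in W_0^{1,\vec p}(\Omega)$ and all integrands extend continuously under our growth hypotheses). Combining the coercivity \eqref{ii2} of $\mathcal A_i$ with the sign condition \eqref{sign} on $\mathcal G$, and setting $Y:=\sum_i \int_\Omega |u_{x_i}|^{p_i}\,dx$ and $a_i:=\|u_{x_i}\|_{L^{p_i}(\Omega)}$, I obtain
$$\alpha\,Y\;\leq\;\sum_{i=1}^N\int_\Omega b_i(x)\,|u|^{\bar p/p_i'}\,|u_{x_i}|\,dx\;+\;|\langle\mathcal F,u\rangle|.$$
The dual pairing is routine: using \eqref{F}, Hölder's and Young's inequalities with a small parameter $\eta>0$ give $|\langle\mathcal F,u\rangle|\leq \eta Y + C(\eta)\sum_i\|f_i\|_{L^{p_i'}}^{p_i'}$.

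The heart of the proof is the convection term. To exploit \eqref{hpdistmain}, I would use \eqref{distlim} to pick $M$ so large that $\max_i\|b_i-T_M b_i\|_{L^{Np_i'/\bar p,\infty}(\Omega)}<d$, and split $b_i=T_M b_i+(b_i-T_M b_i)$. For the tail $b_i-T_M b_i$, I would apply the Hölder inequality in Lorentz spaces with conjugate pair $(Np_i'/\bar p,\infty)$ versus $((Np_i'/\bar p)',1)$; a second Lorentz--Hölder decomposition of the product $|u|^{\bar p/p_i'}\cdot u_{x_i}$, with $|u|^{\bar p/p_i'}\in L^{Np_i'/(N-\bar p),p_i'}(\Omega)$ obtained from the Sobolev embedding $u\in L^{\bar p^*,\bar p}(\Omega)$, brings the estimate to
$$\int_\Omega(b_i-T_M b_i)\,|u|^{\bar p/p_i'}\,|u_{x_i}|\,dx\;\leq\; \|b_i-T_M b_i\|_{L^{Np_i'/\bar p,\infty}}\,\|u\|_{L^{\bar p^*,\bar p}(\Omega)}^{\bar p/p_i'}\,a_i.$$
Invoking the anisotropic Sobolev inequality \eqref{Sob} and the weighted AM--GM inequality of Lemma \ref{Lemma tetai} with exponents $\alpha_{ij}/p_j$, where $\alpha_{ij}=\bar p/(Np_i')+\delta_{ij}$ and $\sum_j \alpha_{ij}/p_j=1$ thanks to \eqref{p bar}, I would arrive at a bound of the form $d\,C_{\text{bad}}(N,\vec p)\,Y$ after summing in $i$.

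For the bounded piece $T_M b_i$, a parallel chain of inequalities (Hölder in Lebesgue spaces using $\|T_M b_i\|_{L^\infty}\leq M$, the anisotropic Sobolev embedding, and AM--GM) leads to a contribution of the form $M\,K(N,\vec p,|\Omega|)\,Y$. Inserting everything, I obtain
$$\bigl(\alpha-\eta-d\,C_{\text{bad}}-M K\bigr)Y\;\leq\;C(\eta)\sum_i \|f_i\|_{L^{p_i'}}^{p_i'},$$
and choosing $\eta$ small and $d\ll \alpha/C_{\text{bad}}$ should close the estimate, yielding \eqref{stima a priori1} with the stated dependence.

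The main obstacle lies in controlling the bounded piece without the truncation level $M$ corrupting the constant on the right-hand side: a naive estimate produces an $M$-proportional coefficient which would pollute the final constant, whereas the statement asserts $C$ depends only on $\alpha$, $N$, $\vec p$, $d$, and $\|\mathcal F\|_*$. This is precisely where the precise consequences of \eqref{hpdistmain} together with the convergence result \eqref{distlim} of \cite{CS} must be exploited beyond a coarse splitting, and is where I expect the most delicate bookkeeping to enter.
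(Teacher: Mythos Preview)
Your proposal has a genuine gap, and you have half-identified it yourself. The inequality you write,
\[
(\alpha-\eta-d\,C_{\mathrm{bad}}-MK)\,Y\;\leq\;C(\eta)\sum_i\|f_i\|_{L^{p_i'}}^{p_i'},
\]
does not merely carry an unpleasant dependence of the constant on $M$: it fails to close at all. The level $M$ must be taken \emph{large} in order to make the tails $\|b_i-T_Mb_i\|_{L^{Np_i'/\bar p,\infty}}$ small via \eqref{distlim}, so the coefficient $\alpha-\eta-d\,C_{\mathrm{bad}}-MK$ is negative and the inequality says nothing. No amount of bookkeeping with \eqref{hpdistmain} and \eqref{distlim} will repair this, because a direct absorption scheme cannot work here.

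The paper's proof handles the bounded piece $T_Mb_i$ differently and introduces a genuinely new idea. First, it tests with $T_ku$ rather than $u$, and after Young's inequality the contribution of $T_Mb_i$ is left as the lower-order quantity $\int_{\Omega_k}|u|^{\bar p}\,dx$, \emph{without} converting it back into $Y$ via Sobolev. This yields the intermediate inequality \eqref{lemmacompact},
\[
\sum_{i=1}^N\int_{\Omega_k}|u_{x_i}|^{p_i}\,dx\;\leq\;C\!\left(\int_{\Omega_k}|u|^{\bar p}\,dx+\sum_{i=1}^N\int_{\Omega_k}|f_i|^{p_i'}\,dx\right),
\]
valid for every $k>0$ (the constant $C$ here absorbs $M$). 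The uniform bound \eqref{stima a priori1} is then obtained not by algebra but by a compactness/contradiction argument in the spirit of \cite{FGMZ}: assuming a sequence $u_n$ with $\|u_n\|\to\infty$, one sets $k_n=\varepsilon\|u_n\|$, normalises $w_n=u_n/\|u_n\|$, passes to a weak limit $\bar w$, and uses \eqref{lemmacompact} at level $k_n$ together with \eqref{Sob prod} to show $\bar w\equiv 0$; a second pass then upgrades this to $w_n\to 0$ strongly, contradicting $\|w_n\|=1$. The truncation parameter $k$ is essential here, since after rescaling it becomes the free small parameter~$\varepsilon$ that drives the argument. This compactness step is the idea your outline is missing.
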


\begin{proof} Using as test function $T_k u$ in \eqref{sol1}, by \eqref{ii2} and \eqref{b}  we get
\begin{align*}
 \alpha \sum_{i=1}^N \int_{\Omega_k}|u_{x_i}|^{p_i} dx + \int_\Om \mathcal G (x,u)T_k u \,dx  &\leq
\sum_{i=1}^N\int_{\Omega_k} |b_i(x)||u|^{\frac{\bar p}{p_i'}}|u_{x_i}| dx+\sum_{i=1}^N \int_{\Omega_k} |f_i||u_{x_i}| dx,
\end{align*}
where $\Omega_k=\{ x\in \Omega: |u(x)|\nc<k\}$. For all $M>0$, 
using \eqref{sign}, Young  and H\"{o}lder inequality  we have
\begin{align*}
\sum_{i=1}^N\int_{\Omega_k}|u_{x_i}|^{p_i} dx
\leq&
C\left(\sum_{i=1}^N\int_{\Omega_k} |u|^{\bar p} dx+\sum_{i=1}^N \|b_i(x)-T_M b_i\|^{p'_i}_{L^{\frac{N p'_i }{\bar p},\infty}(\Om)} \|T_k u\|_ {L^{\bar p^*,\bar p} (\Om)}^{\bar p}\right.\\
&+ \left.\sum_{i=1}^N\int_{\Omega_k} |f_i|^{p'_i} dx \right),
\end{align*}
where $C$ is a suitable positive constant  which can vary from line to line.

Denoting $ B = \prod_{i=1}^N \|(T_k u)_{x_i} \|_{L^{p_i}(\Om)}$, by Sobolev inequality \eqref{Sob} it follows

\begin{align}\label{somma}
\sum_{i=1}^N\int_{\Omega_k}|u_{x_i}|^{p_i} dx
\leq&
C\left(\int_{\Omega_k} |u|^{\bar p} dx+\sum_{i=1}^N \|b_i(x)-T_M b_i\|^{p'_i}_{L^{\frac{N p'_i }{\bar p},\infty}(\Om)} B^{\frac {\bar p}{N}}\right.\\
&+ \left.\sum_{i=1}^N\int_{\Omega_k} |f_i|^{p'_i} dx \right).\nonumber
\end{align}
Previous inequality gives us an estimate of the $j^{th}$ addendum of the sum at the left-hand side of \eqref{somma} as well. Then, elevating to the power $\frac{1}{N p_j}$, making the product on the left and right sides of \eqref{somma} we get

\begin{align*}
B^{\frac 1N}
\leq&
C \left[ \left(\int_{\Omega_k} |u|^{\bar p
} dx \right)^{\frac 1{\bar p}} + \sum_{i=1}^N \|b_i(x)-T_M b_i\|^{\frac {p'_i}{\bar p}}_{L^{\frac{N p'_i }{\bar p},\infty}(\Om)} B^{\frac 1 N} +\left( \sum_{i=1}^N\int_{\Omega_k} |f_i|^{p'_i} dx \right)^{\frac 1{\bar p}} \right].
\end{align*}

 At this point, assuming $d<\left( \frac{1}{CN}\right)^{\frac {\bar p}{p'_i}}  $ in \eqref{hpdistmain}  and using   \eqref{distlim}, we can fix now $M$ large enough in order  to have

\begin{align}\label{stimaB}
B^{\frac 1N}
\leq&
C \left[ \left(\int_{\Omega_k} |u|^{\bar p} dx \right)^{\frac 1{\bar p}} + \left( \sum_{i=1}^N\int_{\Omega_k} |f_i|^{p'_i} dx \right)^{\frac 1{\bar p}} \right].
\end{align}

Combining \eqref{somma} and \eqref{stimaB} we obtain
\begin{align}\label{lemmacompact}
\sum_{i=1}^N\int_{\Omega_k}& |u_{x_i}|^{p_i} dx \leq
C\left( \int_{\Omega_k}   |u|^{\bar p} dx + \sum_{i=1}^N \int_{\Omega_k}  |f_i|^{p'_i} dx\right),
\end{align}
for every $k>0$ and with $C$ positive costant independent of $u$ and $k.$
We prove now that previous estimate gives \eqref{stima a priori1}.  To this aim we follow the idea of  \cite[Lemma 2]{FGMZ}. We argue by contradiction and assume that there exists a sequence of functions $\{u_n\}_n \subseteq  W^{1,\vec{p}}_0(\Omega)$
 satisfying \eqref{lemmacompact} and such that
\[\| u_n \|:=\| u_n \|_{\SobpO}\rightarrow \infty\]
as $n\rightarrow \infty$.
For every $n\in \naturale$ and $\ee >0$, we set $k_n=\ee\|u_n\|$ so that by \eqref{lemmacompact}, for $i=1,...,N,$
\begin{equation*}\label{3.3ter}
\begin{split}
\left(\int_\Omega |\de_{x_i} ( T_{k_n}u_n)|^{p_i}
\dx x\right)^\frac{1}{p_i}
&\leqslant
C^\frac {1}{p_i}\left(
1+
\int_{\Omega}  |u_n|^{\bar p}\chi_{\{|u_n|<k_n\}} \dx x
\right)^\frac{1}{p_i},\\
\end{split}
\end{equation*}
where $T_{k_n}$ is defined in \eqref{T}. Since $\bar p<\bar p^*$, using Lemma \ref{Lemma tetai} with $X=L^{\bar p}(\Omega)$ and $\theta_i=\frac{\bar p}{p_i N}$, we get from the previous inequality
\begin{equation}\label{prodotto}
\begin{split}
 \left(\int_\Omega \prod^N_{i=1}  |\de_{x_i} ( T_{k_n}u_n)|^{\frac {\bar p}{N}}
\dx x\right)^\frac{1}{\bar p}
&\leqslant
\prod^N_{i=1}\left(\int_\Omega |\de_{x_i} ( T_{k_n}u_n)|^{p_i}
\dx x\right)^\frac{1}{N p_i}
\\
&\leqslant C ^\frac {1}{\bar p}\left(
1+
\int_{\Omega}  |u_n|^{\bar p}\chi_{\{|u_n|<k_n\}} \dx x
\right)^\frac{1}{\bar p}.\\
\end{split}
\end{equation}
We set
\[w_n=\frac{u_n}{\|u_n\|}\,.\]
so that, up to subsequence not relabeled,  there exists $\bar w \in\SobpO$ such that
$w_n\rightharpoonup \bar w$ weakly in $\SobpO$, $w_n\to \bar w$ strongly in $L^{\bar p}(\Om)$ and $w_n\to \bar w$ for a.e. in $ \Omega$.
Dividing both sides of
  \eqref{prodotto}
by $\|u_n\|^{\bar p}$  we have
\begin{equation}\label{stima vk ter}
 \int_\Omega  \prod^N_{i=1}  |\de_{x_i} ( T_{\ee
}w_n)|^{\frac {\bar p}{N} }
\dx x =\int_\Omega \frac{ \prod^N_{i=1}  |\de_{x_i} ( T_{k_n}u_n)|^{\frac {\bar p}{N} }}{\|u_n\|^{\bar p}  }
\dx x   \leqslant
C\left(\frac{1}{\|u_n\|^{ \bar p}}+
\int_{\Omega}  |w_n|^{\bar p}\chi_{\{|w_n|<\ee    \}} \dx x
\right).
\end{equation}
 Assume now that
\begin{equation}\label{e eccezionale ter}
|\{x\in\Omega:|\bar w(x)|=\ee\}|=0\,.
\end{equation}
In this case we have $\chi_{\{|w_n|<\ee\}}\to \chi_{\{|\bar w |<\ee\}}$ a.e.\ in $\Omega$ and hence $w_n\,\chi_{\{|w_n|<\ee\}}\to \bar w\,\chi_{\{|\bar w|<\ee\}}$
strongly in $L^{\bar p}(\Omega)$.  So,  since  $T_\ee w_n\rightharpoonup T_\ee \bar w$ weakly in $\SobpO$ and $T_\ee w_n\to T_\ee \bar w$ strongly in $L^{\bar p}(\Omega)$, letting $n\to+\infty$ in \eqref{stima vk ter},  using the semicontinuity of the norm with respect to weak convergence in $\SobpO$, we arrive to the following estimate
\begin{equation}\label{stima v ter}
  \int_\Omega { \prod^N_{i=1}  |\de_{x_i} ( T_{\ee}\bar w)|^{\frac {\bar p}{N} }}
\dx x  \leqslant C \int_{\Omega}|\bar w|^{\bar p}\chi_{\{|\bar w|<\ee\}} \dx x.
\end{equation}
Using Sobolev  inequality \eqref{Sob prod} and H\"{o}lder inequality by \eqref{stima v ter}  we get
\[\ee^{\bar p } \,|\{x\in \Omega:|\bar w|\geqslant\ee\}|^{\frac{\bar p }{\bar p *}}\leqslant C\,\ee^{\bar p } \,|\{x\in \Omega:0<|\bar w|<\ee\}| \,.\]
Passing to the limit as $\ee\downarrow 0$, we deduce
\[|\{x\in \Omega:|\bar w|>0\}|=0\,,\]
that is, $\bar w(x)=0$ a.e. Note that previous equality has been obtained assuming \eqref{e eccezionale ter}. Nevertheless,  the set of values $\ee>0$ for which \eqref{e eccezionale ter} fails is at most countable.
This means $w_n\rightharpoonup 0$ weakly in $\SobpO$. On the other hand, at this point we can use again the same argument above to obtain that $w_n\to0$ strongly in $\SobpO$,  and this gives the contradiction, since by definition $\|w_n\|=1$ for every $n\in \naturale$.
\medskip
\end{proof}

\section{Regularity results}
In this section we study the regularity of weak solutions of problem \eqref{P}. Here and in the following we shall assume notation \eqref{F} being in force. When  the datum $f_i\in L^{p_i}(\Omega)$ for $i=1,..,N$ with $\bar p<N$, as follows by the anisotropic Sobolev embedding, a solution $u$ belongs to $L^{\overline{p}^*}(\Omega)$, where $\bar{p}^*$ is defined in \eqref{p star}. Otherwise if ${f}_i\in L^{s_i}(\Omega)$ with $s_i>p_i$ for $i=1,..,N$, the summability of $u$ improves. In order to study the higher summability of $u$,  the following minimum
\begin{equation}\label{mu}
\mu=\min_{i}\left\{ \frac{s_i}{p_i'}\right\}
\end{equation}
first introduced in \cite{BMS}, plays a crucial role. In \cite[Theorem 2.1 and Remark 5.3]{nostro}  the following Stampacchia type regularity result is proved (see \cite{S} as classical reference).

\begin{theorem}\label{mainTHM} Let $\Omega$ be a  bounded Lipschitz domain, $p_{i}>1$ for $i=1,..,N$, $\bar{p}<N$ and let $s_1,\cdots,s_N$ be such that
\begin{equation*}\label{hp-ri}
1<\mu< \frac{N}{\bar p},
\end{equation*}
 where $\mu$ is defined in \eqref{mu}. Assume that $(\mathcal H 1)-(\mathcal{H}3)$  are fulfilled and  $f_i\in L^{s_i} (\Om)$ for $i=1,..,N$. There exists a positive constant  $d=d(\vec r,N, \alpha, \vec{p})$  such that if
\[
\max_i\left\{{ \emph{dist}}_{L^{\frac{N p'_i }{\bar p},\infty}(\Om)} (b_i, L^\infty(\Om))\right\} <d
\]
and  $u\in W^{1,\vec p}_0(\Om)$
is a weak solution to \eqref{P}, then
\begin{equation*}\label{rem1}
u\in L^{s}(\Om) \quad \text{with }s=\max\{(\mu \bar p)^*,\mu p_{\max}\}.
\end{equation*}
where
\begin{equation*}\label{s}
(\mu\bar p)^*= \frac{N\mu \bar p}{N-\mu\bar p}.
\end{equation*}
\end{theorem}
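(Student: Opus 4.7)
The plan is to carry out a Stampacchia-type iteration adapted to the anisotropic setting, building on the a priori bound of Lemma \ref{Lemma 1}. I would test \eqref{sol1} with (a smooth approximation of) $\varphi = G_k(u)|G_k(u)|^{\lambda-1}$, where $G_k(s) = (|s|-k)_+\,\mathrm{sign}(s)$, $k>0$, and $\lambda\ge 1$ is a parameter to be tuned depending on which of the two exponents $(\mu\bar p)^*$ or $\mu p_{\max}$ is the larger. Setting $A_k=\{x\in\Omega:|u(x)|>k\}$, the chain rule together with the coercivity in $(\mathcal H 1)$ produces a lower bound of the form $\alpha\,c(\lambda)\sum_i\int_{A_k}|G_k(u)|^{\lambda-1}|\partial_{x_i}u|^{p_i}\,dx$, whereas the sign condition \eqref{sign} allows the zero-order term $\mathcal{G}(x,u)\,\varphi$ to be discarded since it is nonnegative.

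For the convection contribution I would argue as in Lemma \ref{Lemma 1}: decompose $b_i = T_M b_i + (b_i - T_M b_i)$, apply $(\mathcal H 2)$, the H\"older--Lorentz inequality and the anisotropic Sobolev--Lorentz embedding \eqref{Sob prod}. The $T_M b_i$ piece is handled by Young's inequality, and the remainder is absorbed into the left-hand side via \eqref{distlim} and the smallness assumption \eqref{hpdistmain}; this fixes $d$ quantitatively in terms of $\widetilde{S}_N$, $\alpha$, $N$ and $\vec p$. The datum $\mathcal{F}=-\sum_i (f_i)_{x_i}$ is treated by Young's inequality, yielding $\sum_i\int_{A_k}|f_i|^{p_i'}|G_k(u)|^{\lambda-1}\,dx$; a further H\"older with exponents $s_i/p_i'$ and its conjugate brings in the parameter $\mu$ defined in \eqref{mu}.

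Combining with either the anisotropic Sobolev--Lorentz inequality \eqref{Sob prod} (to produce the exponent $\bar p^*$, hence $(\mu\bar p)^*$ after iteration) or the Poincar\'e inequality \eqref{dis poincare} (to produce $p_{\max}$, hence $\mu p_{\max}$), and using the inequality between geometric and arithmetic means to collapse the anisotropic product $\prod_i|\partial_{x_i}(|G_k(u)|^{(\lambda+p_i-1)/p_i})|^{1/N}$ into a single $L^{\bar p}$-norm, one arrives at an integral inequality of Stampacchia type, schematically
\[
\Bigl(\int_{A_k}|G_k(u)|^{s}\,dx\Bigr)^{1/s} \le C\,|A_k|^{\frac{1}{\mu}-\frac{1}{s}},\qquad s=\max\bigl\{(\mu\bar p)^*,\mu p_{\max}\bigr\}.
\]
Since $1/\mu - 1/s>0$ under the assumption $\mu<N/\bar p$, the classical Stampacchia iteration lemma on decreasing super-level sets then yields $u\in L^{s}(\Omega)$.

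The main obstacle is the calibration of the exponent $\lambda$ in the test function so that the Sobolev exponent produced on the left matches the H\"older exponent $\mu$ coming from the data on the right, and that this matching works uniformly across all $N$ coordinate directions at once. This is precisely where the definition of $\mu$ as a minimum over $i$ in \eqref{mu} is essential, and where the two regimes $(\mu\bar p)^*\ge \mu p_{\max}$ versus $(\mu\bar p)^*<\mu p_{\max}$ must be distinguished: the first is controlled by \eqref{Sob prod}, the second by Poincar\'e. A secondary technical point is that, since the $p_i$ differ, the natural exponent on $|G_k(u)|$ arising in each direction-by-direction estimate is not the same, so one must renormalize by the geometric mean before invoking \eqref{Sob prod}; the distance assumption \eqref{hpdistmain} is robust enough for this because the absorption step in Lemma \ref{Lemma 1} is uniform in $k$.
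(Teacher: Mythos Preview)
The paper does not prove Theorem~\ref{mainTHM} in the text: it is quoted from \cite[Theorem~2.1 and Remark~5.3]{nostro} as a ``Stampacchia type regularity result,'' with no argument supplied. Your outline is consistent with that description and with the tools assembled in the paper (the splitting $b_i=T_Mb_i+(b_i-T_Mb_i)$ and absorption under \eqref{hpdistmain} as in Lemma~\ref{Lemma 1}, the product Sobolev inequality \eqref{Sob prod}, and the Poincar\'e alternative \eqref{dis poincare} for the regime $\mu p_{\max}>(\mu\bar p)^*$), so there is nothing in the present paper to compare your argument against.

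One technical caveat on the sketch: the sentence ``the classical Stampacchia iteration lemma on decreasing super-level sets then yields $u\in L^{s}$'' overstates what the level-set argument delivers. With $\lambda=1$ in your test function the recursion one obtains is $|A_h|\le C(h-k)^{-\bar p^*}|A_k|^{\beta}$ with $\beta=(1-1/\mu)\,N/(N-\bar p)$, and $\beta<1$ exactly when $\mu<N/\bar p$; in that regime Stampacchia's lemma produces only the Marcinkiewicz bound $u\in L^{(\mu\bar p)^*,\infty}$, not $u\in L^{(\mu\bar p)^*}$. To reach the Lebesgue space you must genuinely use $\lambda>1$, and then the mechanism is a direct estimate (or a finite bootstrap in $\lambda$) rather than a super-level-set iteration. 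You should also make explicit the extra truncation $T_M(G_k u)$, since for $\lambda>1$ the power test function is not a priori admissible in $W_0^{1,\vec p}(\Omega)$ from $u\in L^{p_\infty}$ alone; uniform-in-$M$ estimates followed by Fatou are needed.
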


\medskip

Without lower order terms in \cite{BMS} the authors have proved that the boundedness of a weak solution of Dirichlet problems is guaranteed under the assumption
$\mu> \frac{N}{\bar p},$
where $\mu$ is defined in \eqref{mu}. However if $\mathcal{B}_i\not\equiv0$  for $i=1,\cdots,N$ the boundedness is not assured assuming that \eqref{hpdistmain} is in force as showed in Example 4.8 of \cite{GGM} (when $p_i=2$  for $i=1,\cdots,N$). The smallness of $\|b_i\|_{L^{\frac{Np_i'}{\bar p},\infty}}$ for $i=1,\cdots,N$ neither is sufficient to get boundedness, as showed in Example 2.3 of \cite{nostro}.

In order to get the boundedness of solutions,  we need to improve the summability of data and of the coefficients $b_i$.

 \begin{lemma}\label{L infty}

Let  $\Omega$ be a  bounded Lipschitz domain, $p_{i}>1$ for $i=1,..,N$, $\bar{p}<N$,
 and let $s_1,\cdots,s_N$ be such that
  \begin{equation}\label{alpha}
  \mu >\frac{N}{\bar p}
  \end{equation}
 where $\mu$ is defined in \eqref{mu}. Assume that  $(\mathcal H 1)-(\mathcal{H}3)$  are fulfilled and  $b_i,f_i\in L^{s_i} (\Om)$ for $i=1,..,N$. Then  every weak solution $u$ of problem \eqref{P} is bounded and  there exists a positive constant $C= C (\alpha,\Omega, N,\overrightarrow{p},\|{b}_i\|_{L^{s_i}(\Omega) } , \|{f}_i\|_{L^{s_i}(\Omega) } )$ such that
 \[
 \|u\|_{\infty}\leq C.
 \]
\end{lemma}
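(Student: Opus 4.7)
The plan is to run a Stampacchia/De~Giorgi iteration on the super--level sets $A_k:=\{|u|>k\}$. As a preliminary remark, the hypothesis $\mu>N/\bar p$ forces $s_i>Np_i'/\bar p$ for every $i$; combined with the embedding $L^{s_i}(\Om)\hookrightarrow L^{Np_i'/\bar p,\infty}(\Om)$ from \eqref{LorLebEmb} and the convergence $\|b_i-T_Mb_i\|_{L^{s_i}(\Om)}\to 0$ by dominated convergence, this gives $\dist_{L^{Np_i'/\bar p,\infty}(\Om)}(b_i,L^\infty(\Om))=0$. Hence Lemma \ref{Lemma 1} applies for \emph{any} $d>0$, and provides an a~priori bound on $\sum_i\int_\Om|u_{x_i}|^{p_i}\,dx$, and via \eqref{Sob} on $\|u\|_{L^{\bar p^*}(\Om)}$, with constants depending only on the data. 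By Chebyshev I may then fix $k_0=k_0(\mathrm{data})$ so that $|A_{k_0}|$ is as small as will be needed below.

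For the energy estimate I test the weak formulation with $G_ku:=(|u|-k)_+\,\mathrm{sgn}(u)\in W^{1,\vec p}_0(\Om)$ for $k\geq k_0$. The sign condition \eqref{sign} kills the $\mathcal G$ term, while \eqref{ii2} and Young's inequality reduce the $\mathcal B_i$ and $f_i$ contributions to
\[
\sum_i\int_{A_k}|u_{x_i}|^{p_i}\,dx\leq C\sum_i\int_{A_k}b_i^{p_i'}|u|^{\bar p}\,dx+C\sum_i\int_{A_k}|f_i|^{p_i'}\,dx.
\]
Splitting $|u|^{\bar p}\leq C(k^{\bar p}+|G_ku|^{\bar p})$ and applying H\"older with $b_i^{p_i'}\in L^{s_i/p_i'}(\Om)$ bounds the first piece by $Ck^{\bar p}|A_k|^{1-p_i'/s_i}$ and the second by $C\|G_ku\|_{L^{\bar p q_i'}(A_k)}^{\bar p}$, where $q_i'=s_i/(s_i-p_i')$. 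The crucial point is that $\mu>N/\bar p$ gives $\bar p q_i'<\bar p^*$ \emph{strictly}, so the inclusion $L^{\bar p^*}(\Om)\hookrightarrow L^{\bar p q_i'}(A_k)$ carries a factor $|A_k|^{\delta_i}$ with $\delta_i=1/(\bar p q_i')-1/\bar p^*>0$. Combining with \eqref{Sob} and the AM--GM identity $\prod_i a_i^{\bar p/N}=\prod_i(a_i^{p_i})^{\bar p/(Np_i)}\leq C\sum_i a_i^{p_i}$ (valid because $\sum_i\bar p/(Np_i)=1$), I obtain $\|G_ku\|_{L^{\bar p^*}(\Om)}^{\bar p}\leq C\sum_i\int_{A_k}|u_{x_i}|^{p_i}\,dx$. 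The $|G_ku|^{\bar p}$ piece therefore appears on the right as a fraction $C|A_k|^{\bar p\delta_i}$ of the left--hand side, which, for $k_0$ chosen large enough, is at most $1/2$ and can be absorbed. Together with $\int_{A_k}|f_i|^{p_i'}\leq C|A_k|^{1-p_i'/s_i}$, this yields
\[
\sum_i\int_{A_k}|u_{x_i}|^{p_i}\,dx\leq C(k^{\bar p}+1)\,|A_k|^{\theta},\qquad k\geq k_0,\quad\theta=\min_i(1-p_i'/s_i).
\]

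For the iteration, Sobolev and $(h-k)^{\bar p^*}|A_h|\leq\|G_ku\|_{L^{\bar p^*}}^{\bar p^*}$ combined with the preceding display give
\[
(h-k)^{\bar p^*}|A_h|\leq C(k^{\bar p}+1)^{\bar p^*/\bar p}|A_k|^{\beta},\qquad h>k\geq k_0,
\]
where $\beta=(\bar p^*/\bar p)\theta$, and strict inequality $\beta>1$ is precisely the content of \eqref{alpha}. The main obstacle is that the prefactor $(k^{\bar p}+1)^{\bar p^*/\bar p}$ still depends on $k$, so the original-form Stampacchia lemma does not apply. I therefore run a De~Giorgi iteration along the fixed window $k_n=k_0+d(1-2^{-n})$: the levels stay bounded by $k_0+d$, freezing the prefactor to a constant $K=K(k_0,d)$, while the gaps $k_{n+1}-k_n=d\,2^{-n-1}$ yield a recursion $\varphi_{n+1}\leq D\,b^n\varphi_n^{\beta}$ with $\varphi_n:=|A_{k_n}|$ and $b=2^{\bar p^*}$. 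The standard convergence lemma for such recursions forces $\varphi_n\to 0$ provided $\varphi_0=|A_{k_0}|$ is sufficiently small---a smallness secured in the first paragraph---so $|A_{k_0+d}|=0$. Choosing $d$ (depending only on data) large enough to satisfy the closure condition delivers $\|u\|_\infty\leq k_0+d$, with the resulting constant depending only on $\alpha,\Omega,N,\vec p,\|b_i\|_{L^{s_i}},\|f_i\|_{L^{s_i}}$.
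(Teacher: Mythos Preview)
Your proof is correct and shares the overall architecture of the paper's argument---test with $G_ku$, derive an energy estimate on $A_k$, then force $|A_k|=0$ for large $k$---but the two technical steps are carried out differently. For the absorption of the $b_i^{p_i'}|G_ku|^{\bar p}$ term you route it through the intermediate space $L^{\bar p q_i'}$ and exploit the strict gap $\bar p q_i'<\bar p^*$ to pull out a positive power $|A_k|^{\bar p\delta_i}$; the paper instead bounds this term directly by $\|b_i\|_{L^{Np_i'/\bar p}(A_k)}^{p_i'}\|G_ku\|_{L^{\bar p^*}}^{\bar p}$ and then uses absolute continuity of the Lebesgue norm (available since $b_i\in L^{s_i}$ with $s_i>Np_i'/\bar p$) to make the first factor small. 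For the decay step you run a De~Giorgi iteration on a bounded dyadic window $k_n=k_0+d(1-2^{-n})$, which freezes the troublesome prefactor $(k^{\bar p}+1)^{\bar p^*/\bar p}$ (your argument closes cleanly once $d\geq k_0$, so that $D$ is bounded uniformly); the paper instead sets $g(k)=\int_\Omega|G_ku|\,dx$, notes $g'(k)=-|A_k|$, and rewrites the level-set estimate as the differential inequality $k^{-a}\leq -C(g(k)^{1-a})'$ with $a<1$, which upon integration forces $g(\bar k)=0$ at a finite $\bar k$. Your iteration is the more familiar device; the paper's ODE approach swallows the $k$-dependent growth in one stroke and avoids the iteration lemma altogether.
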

\begin{proof}
The proof is quite standard, and for the convenience of the reader we give some details. So, let $u$ be a weak solution of problem \eqref{P}. For $k>0$, we use as a test function $G_ku:= u-T_k u$ in \eqref{sol1} and by \eqref{ii1}, \eqref{b} we obtain

\begin{align*}
 \alpha \sum_{i=1}^N \int_{\Omega}|(G_ku)_{x_i}|^{p_i} dx + \int_\Om \mathcal G (x,u)G_k u dx  &\leq
\sum_{i=1}^N\int_{\Omega} |b_i(x)||u|^{\frac{\bar p}{p_i'}}|(G_k u)_{x_i}| dx+\sum_{i=1}^N \int_{\Omega} |f_i||(G_ku)_{x_i}| dx.
\end{align*}
Denoting $A_k=\{ x\in \Omega: |u(x)|>k\}$, we note that by Lemma \ref{Lemma 1} for every $k\geq 0$ we have:
\begin{equation}\label{indip-u}
|A_k|\leq \frac{\|u\|_{L^1(\Om)}}{k} \leq \frac{C_0}{k},
\end{equation}
where $C_0$ is a positive constant indepedent on $u$. We stress that \eqref{hpdistmain} is obviously satisfied when $b_i\in L^{s_i}(\Omega)$.

Using \eqref{sign} and Young inequality we have
\begin{align*}
&&\alpha \sum_{i=1}^N \int_{\Omega}|(G_ku)_{x_i}|^{p_i} dx \leq C
\sum_{i=1}^N\int_{A_k} |b_i(x)|^{p_i'} |G_ku|^{\bar p} dx +C \sum_{i=1}^N\int_{A_k} |b_i(x)|^{p_i'} k^{\bar p} dx \\
&&+ \varepsilon \sum_{i=1}^N  \int_{\Omega}  |(G_ku)_{x_i}|^{p_i} dx+C\sum_{i=1}^N \int_{A_k} |f_i|^{p_i'} dx  + \varepsilon \sum_{i=1}^N  \int_{A_k}  |(G_ku)_{x_i}|^{p_i} dx,\\
\end{align*}
for every $\varepsilon >0$ and a suitable $C>0$. Choosing $\varepsilon =\varepsilon (\alpha, \vec{p})$ small enough we have

\begin{align*}
&&\sum_{i=1}^N \int_{\Omega}|(G_ku)_{x_i}|^{p_i} dx \leq C \left(
\sum_{i=1}^N\int_{A_k} |b_i|^{p_i'} |G_ku|^{\bar p} dx + \sum_{i=1}^N\int_{A_k} |b_i(x)|^{p_i'} k^{\bar p} dx + \sum_{i=1}^N \int_{A_k} |f_i|^{p_i'} dx \right)\\ \\
&& \leq    C\left[ \sum_{i=1}^N \|b_i\|^{p'_i}_{L^{\frac{N p'_i }{\bar p}}(A_k )} \|G_k u\|_ {L^{\bar p^*} (\Om)}^{\bar p} +\sum_{i=1}^N  \int_{A_k} \left(|b_i(x)|^{p_i'} k^{\bar p}  + |f_i|^{p_i'}\right) dx  \right],\\
\end{align*}
where, here and below, $C$ is a suitable positive constant  which can vary from line to line.

As before, denoting $ B = \prod_{i=1}^N \|(G_k u)_{x_i} \|_{L^{p_i}(A_k)}$, by Sobolev inequality \eqref{Sob} it follows

\begin{align}\label{somma2}
\sum_{i=1}^N\int_{\Omega}|(G_ku)_{x_i}|^{p_i} dx
\leq&
C\left[\sum_{i=1}^N \|b_i\|^{p'_i}_{L^{\frac{N p'_i }{\bar p}}(A_k)} B^{\frac {\bar p}{N}}
+\sum_{i=1}^N  \int_{A_k} \left(|b_i(x)|^{p_i'} k^{\bar p}  + |f_i|^{p_i'}\right) dx \right].
\end{align}
Previous inequality gives us an estimate of the $j^{th}$ addendum of the sum at the left-hand side of \eqref{somma2} as well. Then, elevating to the power $\frac{1}{N p_j}$, making the product on the left and right sides of \eqref{somma2} we get

\begin{align*}
B^{\frac 1N}
\leq&
C \left[  \sum_{i=1}^N \|b_i(x)\|^{\frac {p'_i}{\bar p}}_{L^{\frac{N p'_i }{\bar p}}(A_k)} B^{\frac 1 N} +\left( \sum_{i=1}^N  \int_{A_k} (|b_i(x)|^{p_i'} k^{\bar p}  + |f_i|^{p_i'}) dx \right)^{\frac 1{\bar p}} \right].
\end{align*}

 At this point, we can choose $k=k(N, \vec{p}, b_i, \alpha, C)>1$  large enough in order  to have $|A_k|$ small and so by absolutely continuity of the Lebesgue norm we have

\begin{align}\label{stimaB2}
B^{\frac 1N}
\leq&
C_1 \left[  \left( \sum_{i=1}^N  \int_{A_k} (|b_i(x)|^{p_i'} k^{\bar p}  + |f_i|^{p_i'}) dx \right)^{\frac 1{\bar p}} \right]
\end{align}
for suitable $C_1>0$. Combining \eqref{somma2} and \eqref{stimaB2} we obtain

\begin{equation}\label{dicas1}
\sum_{i=1}^N\int_{\Omega} |(G_k u)_{x_i}|^{p_i} dx \leq
C_2\left(  \sum_{i=1}^N  \int_{A_k} (|b_i(x)|^{p_i'} k^{\bar p}  + |f_i|^{p_i'}) dx\right).
\end{equation}

As before, we note that previous inequality gives us an estimate of the $j^{th}$ addendum of the sum at the left-hand side of \eqref{dicas1} as well. Then, elevating to the power $\frac{1}{N p_j}$, making the product on the left and right sides of \eqref{dicas1} and using Sobolev inequality, we get
\begin{eqnarray*}
\| G_k u\|_{L^{\bar{p}^*}(\Om)}  &&\leq C_3\prod_{j=1}^N  \left(  \sum_{i=1}^N  \int_{A_k} (|b_i(x)|^{p_i'} k^{\bar p}  + |f_i|^{p_i'}) dx\right)^{\frac{1}{N p_j}} \\
& &\leq  C_3   \left(  \sum_{i=1}^N  \int_{A_k} (|b_i(x)|^{p_i'} k^{\bar p}  + |f_i|^{p_i'}) dx\right)^{\frac{1}{\bar{p}}} \\
&& \leq  C_3 k   \left(  \sum_{i=1}^N ( \|b_i\|^{p_i'} _{s_i}  + 	\|f_i\|_{s_i}^{p_i'}) \,\,|A_k|^{1-\frac{p'_i}{s_i}}   \right)^{\frac{1}{\bar{p}}}.    \\
\end{eqnarray*}

We fix now $k_0>1 $  such that $|A_{k_0}|< 1$. Note that in view of \eqref{indip-u} we can fix such $k_0$ independent on $u$.

Applying H\"{o}lder inequality at the left hand side of previous inequality, for every $k\geq k_0 $ we get

\begin{equation}\label{indep-u2}
\int_\Om |G_k u|\, dx \leq C \, k |A_k|^{\frac {\eta}{\bar p }+1 -\frac 1{\bar {p}^*}}\qquad  \mbox{ where }\quad \eta = \min_i \{ 1-\frac{p'_i}{s_i}\}.
\end{equation}
Observe now that the function
\[
g(k):=\int_\Om |G_k u| \,dx
\]
is a non-negative and decreasing function such that $g'(k)= -|A_k|$. Hence we can rewrite  \eqref{indep-u2} as
\[
\left(\frac{g(k)}{k}\right)^{a} \leq -C g'(k) \qquad  \mbox{ where }\quad   a := \frac{\bar{p}\bar{p}^*}{\eta \bar{p}^* + \bar{p}\bar{p}^*-\bar{p}}
\]
and by \eqref{alpha}, it holds $(1-a) >0$.
Let us consider now every value of  $k> k_0$ such that $g(k)\not= 0.$ By previous inequality we get

\[
{k}^{-a} \leq -C g'(k) g(k)^{-a} = -C (g(k) ^{(1-a)})'.
\]

Integrating previous inequality with respect to $k$ from $k_0 $ to $k$ we get

\[
{k}^{1-a}- {k_0}^{1-a} \leq -C (g(k)^{1-a}- g(k_0) ^{(1-a)}).
\]
This implies

\[
g(k)^{1-a} \leq    C g(k_0) ^{(1-a)}   - {k}^{1-a}+ {k_0}^{1-a}.
\]

It is obvious at this point that, since $g(k_0)\leq \|u\|_{L^1(\Om)}$, there exists a value $\bar{k} $ (independent in $u$) such that $g(\bar {k}) = 0 $ and the thesis follows.

\end{proof}

\section{Existence result}

In this section we analyze the existence of weak solutions of problem \eqref{P}.
It is well known that, if the operator in problem \eqref{P} is coercive, then a solution exists in $W_0^{1,\vec{p}}(\Om)$.  Unfortunately, under our assumptions  the coercivity for the involved operator in problem \eqref{P} is not guaranteed.
Another difficulty is due to the
singularity of coefficients $b_i$ in the lower order term. Indeed in the Marcinkiewicz space  $L^{\frac{N\,p'_i} {\bar p},\infty}(\Om)$, which is slightly larger than Lebesgue space $L^{\frac{N\,p'_i} {\bar p}}(\Om)$, the bounded functions are not dense and the norm is not absolutely continuous (i.e. a function can have large norm even if restricted to a set with small measure).

\begin{theorem}\label{TH E1}
Let us assume that  $\Omega$ is a  bounded Lipschitz domain, $p_{i}>1$ for $i=1,..,N$, $\bar{p}<N$ and ($\mathcal{H}1$)-($\mathcal{H}4$)  are in force. There exists  a positive constant  $d=d(N, \alpha, \vec{p})$  such that if \eqref{hpdistmain} holds,
then there exists
at least a weak solution to problem (\ref{P}).
\end{theorem}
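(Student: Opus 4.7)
The strategy is a standard approximation argument tailored to overcome the two structural obstructions: the singularity of the coefficients $b_i$ in the Marcinkiewicz space and the lack of coercivity of the full operator. For each $n\in\mathbb{N}$ I would truncate the singular part by setting $b_{i,n}:=T_n b_i\in L^{\infty}(\Omega)$, and accordingly choose a Carath\'eodory approximation $\mathcal{B}_{i,n}(x,s)$ of $\mathcal{B}_i(x,s)$ satisfying $|\mathcal{B}_{i,n}(x,s)|\leq b_{i,n}(x)|s|^{\bar p/p'_i}$ and converging to $\mathcal{B}_i(x,s)$ pointwise in $s$ and a.e.\ in $x$. The crucial observation is that for every $M\leq n$
\[
\|b_{i,n}-T_M b_{i,n}\|_{L^{Np'_i/\bar p,\infty}(\Omega)}\leq \|b_i-T_M b_i\|_{L^{Np'_i/\bar p,\infty}(\Omega)},
\]
so that the distance smallness condition \eqref{hpdistmain} is inherited by the approximate coefficients uniformly in $n$. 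For each $n$ the convection term is bounded, hence pseudomonotone in $W_0^{1,\vec p}(\Omega)$ via the compact embedding, and existence of an approximate solution $u_n\in W_0^{1,\vec p}(\Omega)$ follows from the Leray--Lions theorem, possibly after a further inner truncation in $s$ to recover global coercivity.

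The argument of Lemma \ref{Lemma 1} applies verbatim to each approximate problem and yields the uniform bound
\[
\sum_{i=1}^N\int_\Omega |(u_n)_{x_i}|^{p_i}\,dx\leq C
\]
independently of $n$. Hence, up to a (non-relabelled) subsequence, $u_n\rightharpoonup u$ weakly in $W_0^{1,\vec p}(\Omega)$, $u_n\to u$ strongly in $L^q(\Omega)$ for every $q<\bar p^*$, and $u_n\to u$ a.e.\ in $\Omega$.

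To pass to the limit in the weak formulation, the zero-order term $\mathcal{G}(x,u_n)$ is handled by Vitali's theorem, combining a.e.\ convergence with the equi-integrability coming from $\gamma<p_\infty-1$ and the uniform Sobolev bound on $u_n$. For the convection term $\mathcal{B}_{i,n}(x,u_n)$ I would use the splitting $b_i=T_M b_i+(b_i-T_M b_i)$: the bounded piece gives strong $L^{p'_i}$-convergence by dominated convergence, while the remainder is estimated via H\"older's inequality in Lorentz spaces and the anisotropic Sobolev embedding into $L^{\bar p^*,\bar p}(\Omega)$, and made arbitrarily small by taking $M$ large thanks to \eqref{distlim}.

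The main obstacle is the identification of the weak limit of the principal part $\mathcal{A}_i(x,\nabla u_n)$, since the non-compactness of $u\mapsto b_i|u|^{\bar p/p'_i-1}u$ prevents a direct pseudomonotonicity argument. I would exploit the strict monotonicity \eqref{ii3} through a Minty-type scheme: testing the approximate equation with $T_k(u_n-u)$ and using the previously established convergences of the lower order terms together with the uniform estimate, one deduces
\[
\limsup_{n\to\infty}\sum_{i=1}^N\int_\Omega \bigl(\mathcal{A}_i(x,\nabla u_n)-\mathcal{A}_i(x,\nabla u)\bigr)\bigl((u_n)_{x_i}-u_{x_i}\bigr)\chi_{\{|u_n-u|<k\}}\,dx\leq 0.
\]
Combined with \eqref{ii3} this forces $\nabla u_n\to\nabla u$ a.e.\ in $\Omega$, whence $\mathcal{A}_i(x,\nabla u_n)\to\mathcal{A}_i(x,\nabla u)$ weakly in $L^{p'_i}(\Omega)$ by Vitali, using \eqref{ii1} and the uniform $L^{p_i}$-bound on $(u_n)_{x_i}$ for equi-integrability. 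Passing to the limit in each term of \eqref{sol1} then shows that $u\in W_0^{1,\vec p}(\Omega)$ is a weak solution of \eqref{P}.
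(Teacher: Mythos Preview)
Your argument for the strong $L^{p'_i}$--convergence of the convection term is where the plan breaks down. You write that the tail $(b_i-T_M b_i)|u_n|^{\bar p/p'_i}$ can be ``made arbitrarily small by taking $M$ large thanks to \eqref{distlim}'', but \eqref{distlim} says precisely the opposite: $\lim_{M\to\infty}\|b_i-T_M b_i\|_{L^{Np'_i/\bar p,\infty}}=\dist_{L^{Np'_i/\bar p,\infty}}(b_i,L^\infty)$, and this distance is in general a \emph{strictly positive} number (merely smaller than $d$), because $L^\infty$ is not dense in the Marcinkiewicz space. Hence the remainder in your splitting stays bounded away from zero and you do not obtain $\mathcal B_{i,n}(x,u_n)\to\mathcal B_i(x,u)$ strongly in $L^{p'_i}(\Omega)$. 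This is exactly the obstruction singled out in the paper (see Remark~\ref{ex1}): the map $u\mapsto b_i|u|^{\bar p/p'_i-1}u$ from $W_0^{1,\vec p}(\Omega)$ to $L^{p'_i}(\Omega)$ is \emph{not} compact when $b_i$ is only in $L^{Np'_i/\bar p,\infty}$. Since your Minty step relies on that strong convergence, the inequality $\limsup_n\sum_i\int(\mathcal A_i(x,\nabla u_n)-\mathcal A_i(x,\nabla u))\,\partial_{x_i}T_k(u_n-u)\le 0$ is left unjustified.

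The paper circumvents this by testing the approximate equation with $\varphi_n=\arctan(u_n-u)$ rather than $T_k(u_n-u)$: the extra factor $\eta'(u_n-u)=(1+|u_n-u|^2)^{-1}$ in $\partial_{x_i}\varphi_n$ damps $|u_n-u|^{\bar p}$ and restores equi--integrability of $b_i^{p'_i}|u_n-u|^{\bar p}\eta'(u_n-u)^{p'_i}$ via Vitali (splitting the cases $\bar p\le 2$ and $\bar p>2$). Interestingly, your choice $T_k(u_n-u)$ can also be made to work, but through a different mechanism than the one you invoke: on the set $\{|u_n-u|<k\}$ one has $|\mathcal B_{i,n}(x,u_n)|\le b_i(x)(|u|+k)^{\bar p/p'_i}$, a fixed $L^{p'_i}$ majorant independent of $n$, so dominated convergence gives $\mathcal B_{i,n}(x,u_n)\chi_{\{|u_n-u|<k\}}\to\mathcal B_i(x,u)$ strongly in $L^{p'_i}$, which together with $\partial_{x_i}(u_n-u)\rightharpoonup 0$ suffices. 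Either route closes the gap; the splitting $b_i=T_M b_i+(b_i-T_M b_i)$ alone does not.
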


\medskip

For example in our  prototype \eqref{model 1} we can consider  as coefficient of the lower order term the function

\[
\beta_i(x)=\frac{\kappa_i}{|x|^{\gamma_i}}+\beta_i^0(x)
\]  with $\kappa_i$ suitable small constant, $\beta_i^0 \in L^{\infty}(\Om)$ and
$\gamma_i=\frac{\bar p}{p_i'}$ for $i=1,\cdots,N$. Indeed, in this case, an easy computation shows that
 $
{\rm{dist}}_{L^{\frac{Np'_i}{\bar p},\infty}(\Omega)}   (\beta_i, L^\infty(\Om))=  |\kappa_i|\omega_N^{\frac{\gamma_i}{N}}$.

\medskip

Our proof is detailed in the next subsection.

\medskip

\subsection{Proof of Theorem \ref{TH E1}}\label{dim E1}

 We split the proof in three steps.\nc

\textit{Step 1: Existence when $b_i\in L^\infty(\Omega)$ and $f_i \in C_c^\infty(\Omega)$ for $i=1,\cdots,N$.}

\medskip
We stress that also \nc in this case the operator $\sum_{i=1}^N  \left[ -\de_{x_i}  (\mathcal{A}_i(x,\nabla u)+\mathcal B_i(x,u)    )\right]+\mathcal G(x,u)$ can be not coercive. To overcome this difficulty we use the  boundedness result proved in Lemma \ref{L infty} and so we suppose that a solution $u$ there exists. Using Lemma \ref{L infty}, it follows that
\begin{equation}\label{Stima u}
\|u\|_\infty\leq M,
\end{equation}
where $M$ is a constant depending only on the data of the equation.

Now we define
$$\widetilde{\mathcal{B}}(x,s)=\left\{
\begin{array}[c]{ll}%
\mathcal{B}(x,s) &\hbox { if } |s|\leq M,
 \\
 \mathcal{B}(x,M)& \hbox{ if } |s|>M.
\end{array}
\right.
$$
Problem \eqref{P} with $\mathcal{B}(x,s)=\widetilde{\mathcal{B}}(x,s)$, $b_i\in L^\infty(\Omega)$ and  smooth data \nc has a solution $u$,  since \nc the operator is coercive. Moreover this solution  $u$ verifies estimate \eqref{Stima u}, because $\mathcal{B}(x,s)$ verifies  \eqref{b}. It follows that $\widetilde{\mathcal{B}}(x,s)=\mathcal{B}(x,s)$ and we have proved the existence of a solution to problem \eqref{P} when $b_i\in L^\infty(\Omega)$ and  data \nc are smooth.

\medskip

\noindent \textit{Step 2: Existence when $b_i\in L^\infty(\Omega)$ and $\mathcal{F}\in (W_0^{1,\overrightarrow{p}}(\Omega))^*$.}

We consider the following sequence of approximate problems
\begin{equation}\label{Pn}
\left\{
\begin{array}[c]{ll}%
-\sum_{i=1}^N   \de_{x_i}  \left[\mathcal{A}_i(x,\nabla u_n)+\mathcal B_i(x,u_n)    \right]
+ \mathcal G(x,u_n )
= - \sum_{i=1}^N(f^n_i)_{x_i} &\hbox { in } \Omega,
\\
& \\
u_n=0  & \hbox{ on } \partial \Omega,
\end{array}
\right.
\end{equation}
where $f^n_i\in C_c^\infty(\Omega)$ such that
\begin{equation}\label{f smooth}
f^n_i\rightarrow f_i \quad \text{ in } L^{p_i'}(\Omega),
\quad \|f^n_i\|_{ L^{p_i'}(\Omega)}\leq\|f_i\|_{ L^{p_i'}(\Omega)} \text{ for }i=1,\cdots,N.
\end{equation}
Step 1  assure  the existence of a solution $u_n$ of problem \eqref{Pn}. We stress that estimate \eqref{stima a priori1} holds for problem  \eqref{Pn} when  $b_i\in L^\infty(\Omega)$ for $i=1,\cdots, N$.

  We get that the sequence $\{u_n\}_n$ is bounded in $W_0^{1,\overrightarrow{p}}(\Omega)$ by a constant independent of $n$. Then there exists a function $u \in W_0^{1,\overrightarrow{p}}(\Omega)$ such that (up to a subsequence denoted again by $u_n$)
\begin{equation}\label{weak un}
u_n\rightharpoonup u \text{ weakly in } W_0^{1,\overrightarrow{p}}(\Omega),
\end{equation}
\begin{equation}\label{forte un}
u_n\to u  \text{ in } L^q (\Omega) \text{ for } q<p_\infty
\end{equation}
and
\begin{equation}\label{qo un}u_n\to  u  \text{ a.e. } \Omega.
\end{equation}

To take into account the terms $\mathcal{A}_i(x,\nabla u_n)$ we need the following lemma, which is the anisotropic version of Lemma  2.2 of \cite{L}. Note that if $A_i$ is strongly monotone, the claim is obvious.
\begin{lemma}\label{conv q.o.}
Assume  $(\mathcal H 1)$ be in force. Let us suppose $u_n,u \in W_0^{1,\overrightarrow{p}}(\Omega)$,  $u_n\rightharpoonup u $ weakly in $W_0^{1,\overrightarrow{p}}(\Omega)$ and
\begin{equation}\label{Ip lemma}
\int_\Omega \left(\mathcal{A}_i(x,\nabla u_n)-\mathcal{A}_i(x,\nabla u)\right) \left[\partial_{x_i}u_n-\partial_{x_i}u\right] \,dx \rightarrow0
\end{equation}
for $i=1,\cdots,N$. Then for $i=1,\cdots,N$
\begin{equation}\label{conv qo grad}
\partial_{x_i}u_n\rightarrow \partial_{x_i} u \quad \text{ a.e. in } \Omega.
\end{equation}
\end{lemma}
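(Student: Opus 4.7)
The plan is to mimic the classical Leray--Lions--Boccardo--Murat argument for a.e.\ convergence of gradients, but adapted to the orthotropic/anisotropic setting, where each $\mathcal A_i$ is controlled only by the $i$-th partial derivative rather than by the whole gradient. The argument is essentially pointwise: from \eqref{Ip lemma} one first gets an a.e.\ componentwise bound on $\nabla u_n(x)$, then one identifies any accumulation point of $\nabla u_n(x)$ as $\nabla u(x)$ using the monotonicity~\eqref{ii3}.

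Set
\begin{equation*}
J_n^i(x):=(\mathcal A_i(x,\nabla u_n)-\mathcal A_i(x,\nabla u))(\partial_{x_i}u_n-\partial_{x_i}u).
\end{equation*}
By \eqref{ii3} one has $J_n^i\ge 0$ a.e.\ in $\Omega$, while \eqref{Ip lemma} gives $\int_\Omega J_n^i\,dx\to 0$; hence $J_n^i\to 0$ in $L^1(\Omega)$ for each $i$. A diagonal extraction yields a subsequence, still indexed by $n$, along which $J_n^i(x)\to 0$ for a.e.\ $x\in\Omega$ and \emph{every} $i=1,\dots,N$ simultaneously.

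Expanding $J_n^i(x)$ and applying the lower bound \eqref{ii2} to $\mathcal A_i(x,\nabla u_n)\,\partial_{x_i}u_n$ together with the upper bound \eqref{ii1} to the mixed terms and to $\mathcal A_i(x,\nabla u)\,\partial_{x_i} u$ one obtains
\begin{equation*}
J_n^i(x)\ge \alpha|\partial_{x_i}u_n|^{p_i}-\beta_i|\partial_{x_i}u_n|^{p_i-1}|\partial_{x_i}u|-\beta_i|\partial_{x_i}u|^{p_i-1}|\partial_{x_i}u_n|-\beta_i|\partial_{x_i}u|^{p_i}.
\end{equation*}
Since $p_i>1$, the right-hand side tends to $+\infty$ whenever $|\partial_{x_i}u_n|\to\infty$. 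Therefore, at every point $x$ in a set of full measure (where $J_n^i(x)\to 0$ for all $i$ and $\nabla u(x)$ is finite), each sequence $\{\partial_{x_i}u_n(x)\}_n$ is bounded, so the whole vector $\nabla u_n(x)$ is bounded in $\mathbb R^N$.

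Fix such an $x$ and let $n_k$ be any subsequence along which $\nabla u_{n_k}(x)\to\xi^\ast(x)\in\mathbb R^N$. The Carath\'eodory character of $\mathcal A_i$ yields continuity of $\mathcal A_i(x,\cdot)$, so passing to the limit in $J_{n_k}^i(x)\to 0$ produces
\begin{equation*}
(\mathcal A_i(x,\xi^\ast(x))-\mathcal A_i(x,\nabla u(x)))(\xi^\ast_i(x)-\partial_{x_i}u(x))=0
\end{equation*}
for every $i$. The strict monotonicity \eqref{ii3} then forces $\xi^\ast_i(x)=\partial_{x_i}u(x)$ componentwise, so $\xi^\ast(x)=\nabla u(x)$. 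As the accumulation point does not depend on the subsequence, the whole (sub)sequence $\nabla u_n(x)$ converges to $\nabla u(x)$ for a.e.\ $x$, which is \eqref{conv qo grad}. The main obstacle is precisely the componentwise (rather than vectorial) character of \eqref{ii1}--\eqref{ii2}: a single $J_n^i$ carries no information about $\partial_{x_j}u_n$ for $j\ne i$, so one must extract the subsequence so that \eqref{Ip lemma} is exploited simultaneously in all $N$ directions before pointwise analysis can be performed.
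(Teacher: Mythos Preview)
Your proof is correct and follows essentially the same route as the paper's own argument: define the nonnegative integrand $J_n^i$ (the paper calls it $Q_n^i$), extract a subsequence along which it vanishes a.e., use the coercivity/growth bounds \eqref{ii1}--\eqref{ii2} to get pointwise boundedness of each $\partial_{x_i}u_n(x)$, and then identify any accumulation point of $\nabla u_n(x)$ with $\nabla u(x)$ via the strict monotonicity \eqref{ii3}. If anything, you are slightly more careful than the paper in stressing that the subsequence must be chosen so that $J_n^i(x)\to 0$ for \emph{all} $i$ simultaneously before passing to the limit in $\mathcal A_i(x,\cdot)$ (which may depend on the full vector $\xi$), a point the paper's write-up leaves somewhat implicit.
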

\begin{proof} The proof is standard and follows
as in Lemma  2.2 of \cite{L}. For the convenience of the reader we give some details.  Let us denote  for $i=1,\cdots,N$
$$Q_n^i(x):=\left(\mathcal{A}_i(x,\nabla u_n(x))-\mathcal{A}_i(x,\nabla u(x))\right) \left[\partial_{x_i}u_n(x)-\partial_{x_i}u(x)\right].$$
Since \eqref{ii3} and \eqref{Ip lemma} we get (up to a subsequence) that
$$ Q_n^i(x)\rightarrow0 \text{ for every }x\in\Omega\setminus \Omega_0$$
with $|\Omega_0|=0$. Moreover by our assumptions we get (up to a subsequence) $u_n(x)\rightarrow u(x)$ for $x\in\Omega\setminus\Omega_0$ (using that $W_0^{1,\overrightarrow{p}}(\Omega)$ is compactly  embedded in $L^q(\Omega)$ for $q<p_\infty$). Let us suppose $x\in \Omega\setminus \Omega_0$ and let us denote  $\partial_{x_i}u_n(x)=\xi_n^i$ and $\partial_{x_i}u(x)=\xi^i$  and by $\bar\xi^i$ one of the limit of $\xi_n^i$ for $i=1,\cdots,N$. By \eqref{ii2} and \eqref{ii1} we have
\begin{equation}\label{E51}
Q_n^i(x)\geq \alpha |\xi_n^i|^{p_i}-\beta_i|\xi_n^i|^{p_i-1}|\xi^i|-\beta_i
|\xi_n^i||\xi^i|^{p_i-1}.
\end{equation}
If we assume $|\bar\xi^i|=+\infty$ then by \eqref{E51} we obtain $Q_n^i(x)\rightarrow +\infty$, which is a contradiction (see \eqref{Ip lemma}). Then $|\bar\xi^i|<+\infty\quad \forall i=1,\cdots,N$. Using the continuity of $\mathcal{A}_i$ with respect to $\xi$ we conclude that
$$\left(\mathcal{A}_i(x,\bar \xi)-\mathcal{A}_i(x,\xi)\right) \left[\bar \xi^i-\xi^i\right]=0\quad \forall i=1,\cdots,N.$$
So assumption \eqref{ii3} yields then $\bar \xi= \xi$, i.e. $\partial_{x_i}u_n(x)\rightarrow \partial_{x_i}u(x)\quad \forall i=1,\cdots,N$ for every $x\in\Omega\setminus \Omega_0$.
\end{proof}

\medskip

Now we prove that \eqref{Ip lemma} is in force.  We use $\varphi= u_n-u$ as test function in the variational formulation of the  approximating problems \nc \eqref{Pn}:
 \begin{equation*}\label{testPncase1}
 \Si \int_\Om   \left[\mathcal{A}_i(x,\nabla u_n)+   \mathcal B_i(x,u_n)    \right] \dei (u_n-u)    \, dx+\int_{\Omega}\mathcal G(x,u_n)(u_n-u)    \, dx = \sum_{i=1}^N\int_\Omega f_i^n\partial_{x_i}(u_n-u)\, dx.
\end{equation*}
 Adding and subtracting in the previous equality the term
$\sum_{i=1}^N\int_\Omega \mathcal{A}_i(x,\nabla u) \partial_{x_i}(u_n-u)$, by \eqref{ii1}, \eqref{weak un} and \eqref{f smooth} we easily obtain that
$$\sum_{i=1}^N\int_\Omega \mathcal{A}_i(x,\nabla u) \partial_{x_i}(u_n-u)\rightarrow0 \quad \text{ and } \quad \sum_{i=1}^N\int_\Omega f_i^n\partial_{x_i}(u_n-u)\, dx\rightarrow0.$$ \nc
Then recalling again \eqref{weak un} in order to prove \eqref{Ip lemma} it is enough to show that
\begin{equation}\label{con forte Bi}
\mathcal{B}_i(x,u_n)\rightarrow \mathcal{B}_i(x,u) \text{ in }L^{p'_i}(\Omega)\quad  \forall i=1,\cdots,N,
\end{equation}
\begin{equation}\label{con forte G}
\mathcal{G}(x,u_n)\rightarrow \mathcal{G}(x,u) \text{ in }L^{p'_{\infty}}(\Omega).
\end{equation}
To this aim we observe that by continuity of $\mathcal{B}_i$ with respect to $s$ and by \eqref{qo un} we get $\mathcal{B}_i(x,u_n)\rightarrow \mathcal{B}_i(x,u)$ a.e. in $\Omega$.  Now \eqref{b},  H\"{o}lder inequality, \nc Sobolev inequality \eqref{Sob} and \eqref{stima a priori1} yields that for every measurable set $\Om'\subset \Om$
\begin{equation*}\label{33}
\begin{split}
\int_{\Omega'}|\mathcal{B}_i(x,u_n)|^{p'_i}\, dx&\leq \|b_i\|^{p'_i}_\infty\int_{\Omega'}|u_n|^{\bar p}\, dx
\leq \|b_i\|^{p'_i}_\infty\left(\int_{\Omega'}|u_n|^{\bar p^*}dx\right)^{\frac {\bar p}{\bar p^*}}|\Omega'|^{1-\frac {\bar p}{\bar p^*}}\\
&\leq C \|b_i\|^{p'_i}_\infty |\Omega'|^{1-\frac {\bar p}{\bar p^*}},
\end{split}
\end{equation*}
where $C$ is independent of $n$. By Vitali convergence Theorem we conclude that \eqref{con forte Bi} holds.

On the other hand  by continuity of $\mathcal{G}$ with respect to $s$ we get $\mathcal{G}(x,u_n)\rightarrow \mathcal{G}(x,u)$ a.e. in $\Omega$, by \eqref{qo un}.  Now \eqref{b1},  H\"{o}lder inequality, Sobolev inequality \eqref{Sob} and \eqref{stima a priori1} yields that for every measurable set $\Om'\subset \Om$
$$\int_{\Omega'}|\mathcal{G}(x,u_n)|^{p'_{\infty}}\, dx\leq \widetilde{\mu}\int_{\Omega'}|u_n|^{\gamma p'_{\infty} }\, dx
\leq \mu\left(\int_{\Omega'}|u_n|^{ p_{\infty} }\, dx\right)^{\frac{\gamma}{p_{\infty}-1}}|\Omega'|^{1-\frac{\gamma}{p_{\infty}-1}}
\leq C |\Omega'|^{1-\frac{\gamma}{p_{\infty}-1}},$$
where $C$ is independent of $n$. Also in this case  we conclude that \eqref{con forte G} holds by Vitali convergence Theorem.

Then \eqref{Ip lemma} is in force, we can apply Lemma \ref{conv q.o.} and   convergence \nc \eqref{conv qo grad} follows for $i=1,\cdots,N$. Then
\[
\mathcal{A}_i(x,\nabla u_n)\rightarrow \mathcal{A}_i(x,\nabla u) \quad \text{ a.e. } \Omega,
\]
 because of continuity of $\mathcal{A}_i$ with respect to $\xi$. Moreover by \eqref{ii1} and \eqref{stima a priori1} we get that $\mathcal{A}_i(x,\nabla u_n)$ is bounded in $L^{p'_i}(\Omega)$ and then
\begin{equation}\label{con deb Ai}
\mathcal{A}_i(x,\nabla u_n)\rightharpoonup \mathcal{A}_i(x,\nabla u) \text{ weakly in } L^{p'_i}(\Omega).
\end{equation}
Taking $\phi \in W_0^{1,\overrightarrow{p}}(\Omega)$ as test function in \eqref{Pn} and using \eqref{con deb Ai}, \eqref{con forte Bi},\eqref{con forte G} and \eqref{f smooth} we can pass to the limit obtaining that $u$ is a solution of problem \eqref{P} when $b_i\in L^\infty(\Omega)$ and $\mathcal{F}\in (W_0^{1,\overrightarrow{p}}(\Omega))^*$.

\medskip

\noindent\textit{Step 3  : Existence dropping assumptions that $b_i\in L^\infty(\Omega)\quad \forall i=1,\cdots,N$.}

In order to remove the assumptions on $b_i\in L^{\infty }(\Omega)$, we set for each $n\in \mathbb N$ and $i=1,...,N$  for almost every $x\in \Omega$, \nc
\begin{equation*}
\vartheta^i_n(x)=
\left\{
           \begin{array}{ll}
            \frac{T_n b_i(x)}{b_i(x)} & \hbox{ if } b_i(x)\neq0  \\
             1  & \hbox{  if }b_i(x)=0  \\
           \end{array}
         \right.
\end{equation*}
and we consider the following  sequence of approximating problems:
\begin{equation}\label{Pn2}
\left\{
\begin{array}[c]{ll}%
-\sum_{i=1}^N   \de_{x_i} \left[ \mathcal{A}_i(x,\nabla u_n)+\mathcal \vartheta^i_n(x) \mathcal B_i(x,u_n) \right]
+ \mathcal G(x,u_n )
=   \mathcal{F} &\hbox { in } \Omega,
\\
& \\
u_n=0  & \hbox{ on } \partial \Omega.
\end{array}
\right.
\end{equation}

Applying the previous Step 2  with $ \vartheta^i_n(x) b_i (x)\in L^\infty(\Om)$ in place of $b_i$,  for every $n\in \naturale$,   we find a solution $u_n\in \SobpO$  to problem \eqref{Pn2}. Moreover,  by Lemma \ref{Lemma 1}, using estimate \eqref{stima a priori1} we get that the sequence $\{u_n\}_n$
is bounded in $\SobpO$ by a constant independent of $n$. \nc Then, unless to pass to  subsequences not relabeled there exists  $u \in \SobpO$ such that \eqref{weak un}, \eqref{forte un} and \eqref{qo un} hold.

We shall conclude our proof showing that such function $u$ solves problem \eqref{P}. We emphasize that in our assumptions the compactness of the sequence $\mathcal B_i(x, u_n )$ could fail (see Remark \ref{ex1}).

 To this end we use an idea contained in \cite{FGMZ}.
In the rest of our proof we let for simplicity   $\eta(t):=\arctan t$.  Obviously, $\eta \in C^1(\mathbb R)$, $|\eta (t)|\le |t|$ and $0 \le \eta^\prime(t) \le 1$ for all $t \in \mathbb R$. In particular,
$\eta$ is Lipschitz continuous in the whole of $\mathbb R$ and therefore
\[
u_n,u\in \SobpO \quad \Longrightarrow \quad \eta(u_n-u)\in \SobpO\,.
\]
Moreover, since $\eta(0)=0$ we have
\begin{equation}\label{23}
\eta(u_n-u) \rightharpoonup 0    \qquad \text{in $\SobpO$ weakly}\,.
\end{equation}

Testing equation in \eqref{Pn2} by the function $\varphi_n= \eta(u_n-u)$, we get
 \begin{equation}\label{ecco}
 \Si \int_\Om   \left[\mathcal{A}_i(x,\nabla u_n)+ \mathcal \vartheta^i_n(x)  \mathcal B_i(x,u_n)    \right] \dei \varphi_n    \, dx +\int_{\Omega}\mathcal G(x,u_n)\varphi_n    \, dx=  \langle \mathcal{F}\, , \varphi_n \rangle
\end{equation}

As before,  we add and subtract the term
$\sum_{i=1}^N\int_\Omega \mathcal{A}_i(x,\nabla u) \partial_{x_i}\varphi_n$ in previous equality.

In view of \eqref {ii1} and \eqref{b1}, by \eqref{23} we obviously  have

\begin{equation*}\label{con_Ai}
\lim_{n\rightarrow \infty} \sum_{i=1}^N\int_\Omega \mathcal{A}_i(x,\nabla u) \partial_{x_i}\varphi_n\rightarrow0, \quad\qquad   \lim_{n\rightarrow \infty}
 \langle \mathcal{F}\, , \varphi_n \rangle  =  0.
\end{equation*}
Moreover also in this case  we get \eqref{con forte G} and so by \eqref{23} we obtain
\begin{equation*}\label{con_G}
\int_\Omega \mathcal{G}(x, u_n) \varphi_n\, dx\rightarrow0.
\end{equation*}
At this point  it suffices to show that (up a subsequence)
\begin{equation}\label{compattezza}
  \mathcal \vartheta^i_n(x)  \mathcal B_i(x,u_n) \rightarrow
 \mathcal B_i(x,u)  \,  \text{ strongly in }L^{p'_i}(\Omega) \quad \forall i=1,\cdots,N.
\end{equation}
We preliminary observe that  combining  \eqref{qo un} with the property that $\vartheta^i_n\rightarrow 1$ as $n\rightarrow \infty$, we have
\begin{equation}\label{Vitalin1}
\mathcal \vartheta^i_n(x)  \mathcal B_i(x,u_n) \eta' (u_n-u)=\frac {  \mathcal \vartheta^i_n(x)  \mathcal B_i(x,u_n)  }{1+|u_n-u|^2} \rightarrow
 \mathcal B_i(x,u)  \qquad \text{a.e.\ in }\Omega\,.
\end{equation}
Moreover,  using \eqref{b} we note that for every measurable set $\Om'\subset \Om$ it holds
\begin{equation}\label{Vitalin2}
\int_{\Om'}  \left[ \frac { \mathcal \vartheta^i_n(x)  \mathcal B_i(x,u_n)  }{1+|u_n-u|^2}\right]^{ p'_i}dx\le C\int_{\Om'}
\left[ b_i(x)^{p'_i} |u|^{\bar p} +\frac {b_i(x)^{p'_i} |u_n-u|^{\bar p} }{1+|u_n-u|^2}\right]dx,
\end{equation}
for some positive constant $C=C(\bar p)$. Hence, if $1<\bar p\le 2$,  \eqref{compattezza} immediately follows by Vitali convergence Theorem combining \eqref{Vitalin1} and \eqref{Vitalin2}. For $\bar p>2$ we choose $s$ satisfying
\[
\frac {\bar p^\ast}{\bar p} < s < \frac {\bar p^\ast}{\bar p-2}\,,
\]
so that $s^\prime<\frac N {\bar p}$ and $s(\bar p -2)< \bar p^\ast. $ Then,   using Hölder inequality and  \eqref{forte un}  it follows that
\begin{equation*}\label{Vitalin3}
\int_{\Om'}  \frac {b_i(x)^{p'_i} |u_n-u|^{\bar p} }{1+|u_n-u|^2}\le \int_{\Om'}  b_i(x)^{p'_i}   |u_n-u|^{(\bar p-2)} dx\le  \|b_i\|^{p'_i}_{L^{s' \, p'_i}(\Om')}\|u_n-u\|^{(\bar p-2)}_{s(\bar p -2)}\le C  \|b_i\|^{p'_i}_{L^{s' \, p'_i}(\Om')},
\end{equation*}
where $C$ is a constant independent of $n$.  So, also in this case \eqref{compattezza} follows by  Vitali convergence theorem.
So we obtain that for $i=1,\cdots,N$,
\begin{equation}\label{Ip lemmabis}
\int_\Omega \left(\mathcal{A}_i(x,\nabla u_n)-\mathcal{A}_i(x,\nabla u)\right) \left[\partial_{x_i} \eta (u_n-u)\right] \,dx \rightarrow0.
\end{equation}
Now, since  $\eta'(u_n-u)\rightarrow 1$ a.e. in $\Omega$, arguing as in Lemma \ref{conv q.o.} we have that \eqref{Ip lemmabis} implies \eqref{conv qo grad}  for $i=1,\cdots,N$.
 The end of proof runs as in the previous step, obtaining that $u$ is a solution of problem \eqref{P}

\begin{remark} We observe that in case  $p_{\max}:=\max_i p_i >\bar{p}^*$, the compactness of sequence $\mathcal{B}_i(x,u_n)$ in $L^{p'_i}(\Omega)$ holds and then Step 3 in the proof of previous Theorem \ref{TH E1} can be simplified. For completeness we gives some details. In order to obtain  that $\de_{x_i}u_n\to \de_{x_i} u$ a.e. in $\Om$ for every $i=1,...,N$,  we can choose as a test function $\varphi_n=(u_n-u)$ instead of $\varphi_n=\eta (u_n-u)$ in \eqref{ecco} and prove that, unless to pass to a subsequence:
\begin{equation}\label{compattezza2}
  \mathcal \vartheta^i_n(x)  \mathcal B_i(x,u_n) \rightarrow
 \mathcal B_i(x,u)  \,  \text{ strongly in }L^{p'_i}(\Omega) \quad \forall i=1,\cdots,N\,.
\end{equation}
 To prove \eqref{compattezza2},   we note that
 \begin{equation*}\label{Vitalin12}
 \mathcal \vartheta^i_n(x)  \mathcal B_i(x,u_n)   \rightarrow
 \mathcal B_i(x,u)  \qquad \text{a.e.\ in }\Omega\,.
\end{equation*}
and that, using \eqref{b}
for every measurable set $\Om'\subset \Om$ it holds
\begin{equation*}\label{Vitalin2bis}
\int_{\Om'}  \left[  \mathcal \vartheta^i_n(x)  \mathcal B_i(x,u_n)  \right]^{ p'_i}dx\le C\int_{\Om'}
\left[ b_i(x)^{p'_i} |u|^{\bar p} +b_i(x)^{p'_i} |u_n-u|^{\bar p} \right]dx
\end{equation*}
for some positive constant $C=C(\bar p)$. Hence, since $p_{\max} >\bar{p}^*$      and $1<\bar p<N $,   we can choose $s$ satisfying
\[
\frac {N }{N-\bar p} < s \leq  \frac {p_{\max}}{\bar p}\,,
\]
so that $s^\prime<\frac N {\bar p}$ and $s\bar p\leq  p_{\max}. $ Then,   using again Hölder inequality, by Poincaré inequality  it follows that
\begin{equation*}\label{Vitalin3}
\ \int_{\Om'}  b_i(x)^{p'_i}   |u_n-u|^{\bar p} dx\le  \|b_i\|^{p'_i}_{L^{s' \, p'_i}(\Om')}\|u_n-u\|^{\bar p}_{s\bar p }\le C  \|b_i\|^{p'_i}_{L^{s' \, p'_i}(\Om')},
\end{equation*}
where $C$ is a constant independent of $n$. Hence, by  Vitali convergence theorem  \eqref{compattezza2} is proved.
\end{remark}

\begin{remark}\label{ex1}
When $p_i=p<N$ for all $i$ and $N\geq 2$, the compactness of operator
$$T:u\in W_0^{1,\overrightarrow{p}}(\Omega)\rightarrow b|u|^{p-2}u\in L^{p'}(\Omega)$$
could fail. In this case the usual norm in $W_0^{1,p}(\Omega)$ gives an equivalent norm in $W_0^{1,\overrightarrow{p}}(\Omega)$, then  Example 3 in \cite{FGMZ} allows us  to built a sequence of functions $\{u_n\}_{n\in\mathbb{N}}$ in $W_0^{1,\overrightarrow{p}}(\Omega)$ and a function $b\in L^{\frac{N}{p-1},\infty}(\Omega)$ such that $\{\nabla u_n\}_{n\in\mathbb{N}}$ is bounded in $L^{p}(\Omega,\mathbb{R}^N)$, but it is not possible to extract from  $\{b|u_n|^{p-1}\}_{n\in\mathbb{N}}$ any sequence strongly converging in $L^{p'}(\Omega)$.
\end{remark}

\section{Positivity of solutions}

In the isotropic case it is well known (see \cite{B2012}) that when the data is non-negative and the coefficient $b(x)\in L^{N/p-1}(\Omega)$, then the solution is non-negative. In this section we prove a similar result for the anisotropic problem \eqref{P}.
In order to give a precise statement we introduce the following notation: $\mathcal{F}\geq 0$ means $<\mathcal{F},\phi>\geq0$  for every $\phi\geq0$ when $\mathcal{F}\in (W_0^{1,\overrightarrow{p}}(\Omega))^*$. We emphasize that the following result holds  assuming $\mathcal G\equiv 0 $ as well.

\begin{proposition}\label{Prop pos}
Let $u\in W_0^{1,\overrightarrow{p}}(\Omega)$ be a weak solution to problem (\ref{P}) under assumptions ($\mathcal{H}1$)-($\mathcal{H}4$). If $\mathcal{F}\geq 0$, then we have $u\geq0$.
\end{proposition}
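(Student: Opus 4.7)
The strategy is to test the weak formulation \eqref{sol1} with $\varphi=-u^-$, where $u^-=\max\{-u,0\}\in W_0^{1,\vec p}(\Omega)$ (admissible by density of $C_c^\infty(\Omega)$ in $W_0^{1,\vec p}(\Omega)$ and the standard chain rule for Sobolev functions), and to show that the coercivity of $\mathcal{A}_i$ combined with the sign hypotheses forces $u^-\equiv 0$.

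Since $\partial_{x_i}(-u^-)=\chi_{\{u<0\}}\,\partial_{x_i}u$ and $|u|=u^-$ on $\{u<0\}$, inserting $\varphi=-u^-$ in \eqref{sol1} and combining $\langle\mathcal{F},-u^-\rangle\le 0$ (from $\mathcal{F}\ge 0$ and $-u^-\le 0$), the sign condition \eqref{sign} (which makes the zero-order contribution $\int_{\{u<0\}}\mathcal{G}(x,u)\,u\,dx\ge 0$), the coercivity \eqref{ii2}, and the growth \eqref{b} of $\mathcal{B}_i$, I obtain
\[
\alpha\sum_{i=1}^N\int_\Omega|\partial_{x_i}u^-|^{p_i}\,dx\le \sum_{i=1}^N\int_\Omega b_i(x)\,(u^-)^{\bar p/p_i'}\,|\partial_{x_i}u^-|\,dx.
\]

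To absorb the convection term I would proceed exactly as in the proof of Lemma \ref{Lemma 1}: apply Young's inequality together with the H\"older inequality in Lorentz spaces and the anisotropic Sobolev embedding \eqref{Sob} (or the product form \eqref{Sob prod}) applied to $u^-$, and split $b_i=T_Mb_i+(b_i-T_Mb_i)$. By \eqref{distlim} and the distance assumption \eqref{hpdistmain} underlying the existence framework, the Marcinkiewicz piece is made arbitrarily small for $M$ large and absorbed into the left-hand side, while the bounded piece $T_Mb_i$ contributes a term controlled by a constant multiple of $\int_\Omega(u^-)^{\bar p}\,dx$. The contradiction/rescaling argument carried out at the end of the proof of Lemma \ref{Lemma 1}, now applied with $u^-$ in place of $u$ and using $T_k u^-$ as truncations, then rules out the possibility $u^-\not\equiv 0$, yielding the claim $u\ge 0$.

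The main obstacle is precisely the handling of the convection term: because $b_i$ lies only in the Marcinkiewicz space $L^{Np_i'/\bar p,\infty}(\Omega)$, where $L^\infty$ is not dense and the norm is not absolutely continuous, the absorption of $b_i(x)\,(u^-)^{\bar p/p_i'}|\partial_{x_i}u^-|$ into the coercive anisotropic gradient term is delicate and relies on the distance-to-$L^\infty$ decomposition exactly as in Lemma \ref{Lemma 1}; the rest of the argument is then a direct adaptation.
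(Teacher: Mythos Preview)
Your approach differs from the paper's and, as written, proves a strictly weaker statement. The main problem is that you invoke the smallness condition \eqref{hpdistmain} in order to absorb the Marcinkiewicz piece $b_i-T_Mb_i$, but \eqref{hpdistmain} is \emph{not} among the hypotheses $(\mathcal H1)$--$(\mathcal H4)$ of Proposition~\ref{Prop pos}: it is an additional assumption needed only for the a~priori bound (Lemma~\ref{Lemma 1}) and for existence. The positivity statement is asserted for \emph{any} weak solution with $b_i\in L^{Np_i'/\bar p,\infty}(\Omega)$, with no distance restriction, so your argument does not cover the full claim. A secondary issue is that the conclusion of the rescaling argument in Lemma~\ref{Lemma 1} is a uniform \emph{bound} on $\|u\|_{W_0^{1,\vec p}}$, not that $u\equiv 0$; passing from the analogue of \eqref{lemmacompact} with zero data to $u^-\equiv 0$ can indeed be done, but it requires recasting the argument for a single function rather than a blow-up sequence, which you leave implicit.

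The paper's proof is shorter and avoids both difficulties. One tests with $T_h(u^-)$ for small $h>0$ and uses the elementary observation that on the set $\{-h<u<0\}$, where $\nabla T_h(u^-)$ is supported, one has $|u|<h$. After Young's inequality this produces a factor $h^{\bar p}$ on the right-hand side, multiplied by $\int_{\{-h<u<0\}}|b_i|^{p_i'}\,dx$, which is controlled by $\|b_i\|^{p_i'}_{L^{Np_i'/\bar p,\infty}}\,|\{-h<u<0\}|^{1-\bar p/N}$ directly from the definition of the Marcinkiewicz norm --- no splitting of $b_i$, no smallness hypothesis. Applying the anisotropic Sobolev inequality and comparing with $|T_h(u^-)|=h$ on $\{u<-\delta\}$ (for $\delta>h$) gives
\[
|\{u<-\delta\}|^{1/\bar p^*}\le C\,|\{-h<u<0\}|^{(1-\bar p/N)/\bar p},
\]
and letting $h\to 0^+$ forces $|\{u<-\delta\}|=0$ for every $\delta>0$. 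The point is that working at a small truncation level makes the convection term small \emph{by itself}, so one never needs to play the size of $b_i$ against the coercivity constant.
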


\begin{proof}
Let us denote $u^-(x):=\min\{0,u(x)\}$. Taking $T_h(u^-)$ as test function with $h>0$ in \eqref{sol1}, by \eqref{ii2}, \eqref{b}, \eqref{sign} and assumption on the data  we get
\begin{align*}
 \alpha \sum_{i=1}^N \int_{\{-h<u<0\}}|\partial_{x_i}T_h(u^-)|^{p_i} dx &\leq
\sum_{i=1}^N\int_{\{-h<u<0\}} |b_i(x)||u|^{\frac{\bar p}{p'_i}}|\partial_{x_i}T_h(u^-)| dx.
\end{align*}

By Young inequality it follows that
\begin{equation}\label{pos}
 \sum_{i=1}^N \int_{\{-h<u<0\}}|\partial_{x_i}T_h(u^-)|^{p_i} dx \leq
C\sum_{i=1}^N\int_{\{-h<u<0\}} |b_i(x)|^{p'_i}|u|^{\bar p} dx,
\end{equation}
where $C$ is a suitable positive constant. Using the definition \eqref{2.1} in \eqref{pos} we get for every $j$
\begin{align*}
\int_{\{-h<u<0\}}|\partial_{x_j}T_h(u^-)|^{p_j}&\leq
\sum_{i=1}^N\int_{\{-h<u<0\}}|\partial_{x_i}T_h(u^-)|^{p_i}\\
&\leq
C h^{\bar p} \sum_{i=1}^N \|b_i\|^{p_i'}_{ L^{\frac{Np_i'}{\bar p},\infty}(\Omega)}
\int_0^{|\{-h<u<0\}|} s^{- \bar p/N}\, ds.
\end{align*}
By Sobolev inequality \eqref{Sob}, the previous inequality becomes
 \begin{align}\label{pos_2}
 &\left(\int_{\Omega}|T_h(u^-)|^{\bar p^*}\, dx\right)^{\frac{1}{\bar p^*}}
 \leq C
 \prod_{j=1}^N
  \left(
 \int_{\{-h<u<0\}}|\partial_{x_j}T_h(u^-)|^{p_j}\, dx\right)^{\frac{1}{Np_j}}\\
 \nonumber &\leq
  h \left(\int_0^{|\{-h<u<0\}|} s^{- \bar p/N}\, ds\right)^{1/\bar p}
 \prod_{j=1}^N \left(\sum_{i=1}^N \|b_i\|^{p_i'}_{L^{\frac{Np_i'}{\bar p},\infty}(\Omega)}\right)^{\frac{1}{Np_j}}.
 \end{align}
 Now we observe that for $\delta>h$,
 \begin{equation}\label{pos_3}
h|\{u<-\delta\}|^{1/\bar p^*}=\left(\int_{\{u<-\delta\}}|T_h(u^-)|^{\bar p^*}\, dx\right)^{\frac{1}{\bar p^*}}.
 \end{equation}
 Combining \eqref{pos_2} and \eqref{pos_3}, it follows that
 \begin{equation*}
 |\{u<-\delta\}|^{1/\bar p^*}\leq C   \left(\int_0^{|\{-h<u<0\}|} s^{- \bar p/N}\, ds\right)^{1/\bar p}
 \prod_{j=1}^N \left(\sum_{i=1}^N \|b_i\|^{p_i'}_{L^{\frac{Np_i'}{\bar p},\infty}(\Omega)}\right)^{\frac{1}{Np_j}}.
\end{equation*}
Observing that $$\int_0^{|\{-h<u<0\}|} s^{- \bar p/N}\, ds\rightarrow 0 \text{ as } h\rightarrow0^+,$$
we conclude that  the measure of the set $\{u<-\delta\}$ is zero for every $\delta>0$.

\medskip
\end{proof}

\bigskip
As a consequence of Proposition \ref{Prop pos} we get the following partial uniqueness result when $\mathcal{F}\equiv0$.

\begin{corollary}\label{Cor unic}
Let $u\in W_0^{1,\overrightarrow{p}}(\Omega)$ be a weak solution to problem (\ref{P}) under assumptions ($\mathcal{H}1$)-($\mathcal{H}4$). If $\mathcal{F}\equiv0$, then  we have $u\equiv0$.
\end{corollary}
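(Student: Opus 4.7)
The assertion $u\equiv 0$ splits into the two inequalities $u\geq 0$ and $u\leq 0$. The first one is immediate from Proposition \ref{Prop pos}, since $\mathcal{F}\equiv 0$ trivially satisfies $\mathcal{F}\geq 0$, so we directly obtain $u\geq 0$.

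The plan for the reverse inequality is to mirror the proof of Proposition \ref{Prop pos}, this time testing the weak formulation \eqref{sol1} with $T_h(u^+)\geq 0$, $h>0$, in place of $T_h(u^-)$. Because $u\geq 0$ has already been established, we have $T_h(u^+)=T_h(u)$ and $\partial_{x_i}T_h(u^+)=\partial_{x_i}u\,\chi_{\{0<u<h\}}$. Three sign facts then close the computation: assumption \eqref{ii2} gives $\mathcal{A}_i(x,\nabla u)\partial_{x_i}T_h(u^+)\geq \alpha|\partial_{x_i}u|^{p_i}\chi_{\{0<u<h\}}$; the sign condition \eqref{sign}, together with $u\geq 0$ and $T_h(u^+)\geq 0$, yields $\mathcal{G}(x,u)T_h(u^+)\geq 0$, so this term can be dropped; and $\langle \mathcal{F},T_h(u^+)\rangle=0$ by hypothesis. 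Using $(\mathcal{H}2)$ and Young's inequality one reaches an estimate of the form
\[
\sum_{i=1}^N \int_{\{0<u<h\}} |\partial_{x_i}u|^{p_i}\,dx \;\leq\; C\,h^{\bar p}\sum_{i=1}^N \int_{\{0<u<h\}} |b_i(x)|^{p'_i}\,dx,
\]
entirely parallel to the one derived in Proposition \ref{Prop pos}.

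From here I would proceed verbatim as in that proposition: bound the right-hand side through the Marcinkiewicz definition \eqref{2.1} by $\|b_i\|^{p'_i}_{L^{Np'_i/\bar p,\infty}(\Om)}\int_0^{|\{0<u<h\}|} s^{-\bar p/N}\,ds$, and feed the left-hand side into the anisotropic Sobolev inequality \eqref{Sob} applied to $T_h(u^+)$. Since $T_h(u^+)=h$ on $\{u>\delta\}$ for every $\delta>h$, this leads to
\[
|\{u>\delta\}|^{1/\bar p^*}\;\leq\; C\,\Bigl(\int_0^{|\{0<u<h\}|} s^{-\bar p/N}\,ds\Bigr)^{1/\bar p}.
\]
Letting $h\downarrow 0$ forces $|\{0<u<h\}|\to 0$ and hence (since $\bar p/N<1$) the right-hand side vanishes, so $|\{u>\delta\}|=0$ for every $\delta>0$, that is, $u\leq 0$. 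Combining with $u\geq 0$ gives $u\equiv 0$.

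The only genuinely new point compared to Proposition \ref{Prop pos} is the sign bookkeeping for the zero-order term after truncation, which is exactly where $(\mathcal{H}3)$ combined with the positivity of $u$ already proved in the first step is essential; all the remaining steps are a routine transcription of the proof of Proposition \ref{Prop pos} with $u^+$ in place of $u^-$.
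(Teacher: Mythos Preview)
Your proof is correct and follows essentially the same route as the paper: the paper simply states that the result follows by repeating the argument of Proposition~\ref{Prop pos} with both $T_h(u^-)$ and $T_h(u^+)$ as test functions, and you carry this out---invoking the proposition directly for the $u^-$ half and spelling out the $u^+$ half. One small remark: you do not actually need the preliminary fact $u\geq 0$ to handle the zero-order term in the $u^+$ step, since $T_h(u^+)$ vanishes where $u<0$ and \eqref{sign} already gives $\mathcal{G}(x,u)\geq 0$ where $u\geq 0$; but this does not affect the validity of your argument.
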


\begin{proof}
The thesis follows arguing as in the previous proposition taking $T_h(u^-)$ and $T_h(u^+)$ as test function.
\end{proof}

\bigskip

We stress that a similar argument can be used to prove the uniqueness when $p_i=2$ for all $i$ and the coefficient $b_i(x)=b(x)\in L^{N,\infty}(\Omega)$ (see \cite{B2012} when $b_i(x)=b(x)\in L^N(\Omega)$).

\section{Uniqueness results}\label{sez unic}

 First of all  we observe that prototype \eqref{model 1} verifies a strongly monotone condition, but not
the assumption of Lipschitz continuity in $u$ when  $p_i$ are not all equals and  $p_i\leq2$ for $i=1,\cdots,N$ as required in the classical uniqueness result.
Then the novelty of our result consists in the possibility to
deal with cases when $\mathcal{B}(x,u)$ is only H\"{o}lder continuous with respect to $u$ (see \eqref{cond B} below). In general the presence of a zero order term could help in order to obtain uniqueness result, even in the case $p_i\geq2$ for $i=1,\cdots,N$. In what follows we assume that
\begin{equation}\label{cond G}%
\mathcal{G}\left(x,s\right)  \text{ is strictly monotone increasing function in $s$.}
\end{equation}

Note that whenever $\mathcal G$ satisfies \eqref{cond G} and $\mathcal G (x,0)\equiv 0$ for all $x\in\Omega,$ then \eqref{sign} obviously holds. For example $\mathcal{G}\left( x,s\right) =\widetilde{\mu}|s|^{\gamma-1}s$ with $\widetilde{\mu}>0,0<\gamma< p_\infty-1$  verifies \eqref{cond G} and \eqref{b1} and \eqref{sign}.
 We observe that we can relax condition \eqref{cond G} assuming that
\begin{equation*}
 \left(\mathcal{G}\left(  x,s\right)-\mathcal{G}\left(  x,s'\right)\right)(s-s')>0 \text{ for } s> s'.\label{cond G2}%
\end{equation*}

As usual when we deal with uniqueness results for equation in which we have the dependence on the unknown $u$ we consider two cases: first when $p_i\leq2$ for every $i=1,\cdots,N$, then when $p_i\geq2$ for every $i=1,\cdots,N$ and finally the mixed case.

\subsection{First case: $p_i\leq2$ for every $i=1,\cdots,N$}
In this subsection we prove the uniqueness of weak solutions of problem \eqref{P} when all $p_i\leq 2$.
For example in model cases \eqref{model 1} the main difficulty is due to the terms $B_i(x,u)=\beta_i(x)|u|^{\frac{\bar p}{p'_i}-1}u$ with $\beta_i\in L^{\frac{N p'_i}{\bar p},\infty}(\Omega)$, who are H\"{o}lder continuous but not Lipschitz continuous with respect to solution $u$. To prove uniqueness we will strongly use the presence of zero order term.
We assume that
\begin{equation}
\left(  \mathcal{\mathcal{A}}_{i}\left(  x,\xi\right)  -\mathcal{A}_{i}\left(  x,\xi^{\prime}\right)
\right)  \left(  \xi_{i}-\xi_{i}^{\prime}\right)  \geq\widetilde{\alpha}\left(\left\vert \xi_{i}\right\vert +\left\vert \xi_{i}^{\prime
}\right\vert \right) ^{p_{i}-2}\left\vert \xi_{i}-\xi_{i}^{\prime}\right\vert
^{2} \label{monotonia forte}%
\end{equation}
with $\widetilde{\alpha}>0$ and
\begin{equation}\label{cond B}
\left\vert \mathcal{B}_{i}\left(  x,s\right)  -\mathcal{B}_{i}\left(  x,s^{\prime}\right)
\right\vert \leq \widetilde{b}_i(x)
\left\vert s-s^{\prime}\right\vert^{\frac{\bar p}{p_i'}}
\end{equation}
with $\widetilde{b}_i: \,\Om\to [0,+\infty)$ measurable function such that
\begin{equation}\label{riBIS}
 \widetilde{b}_i\in L^{\frac{N\,p'_i} {\bar p},\infty}(\Om),
\end{equation}
for $i=1,..,N$.
We stress that  $\mathcal{A}_{i}\left(  x,\xi\right)=\alpha |\xi_i|^{p_i-2}\xi_i$
verifies \eqref{monotonia forte} and  $\mathcal{B}_{i}\left(  x,s\right) = \beta_i(x)|s|^{\frac{\bar p}{p_i'}-1}s$  verifies \eqref{cond B}, recalling that $\frac{\bar p}{p_i'}\leq 1$, because $\bar p\leq2\leq p_i'$ .

\medskip

\begin{theorem}\label{THU1}
Let $u\in W_0^{1,\overrightarrow{p}}(\Omega)$ be a weak solution to problem (\ref{P}) under assumptions ($\mathcal{H}1$)-($\mathcal{H}4$)  with $p_{i}\leq2$ for $i=1,..,N$, $\bar p<N$  and $\mathcal{G}\not \equiv 0$. If $2\min_i\{\frac{\bar p}{p_i'}\}\geq1$, (\ref{cond G}),(\ref{monotonia forte}) and (\ref{cond B}) are in force, then $u$ is the unique weak solution to problem \eqref{P}.
\end{theorem}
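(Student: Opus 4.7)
Let $u_1,u_2\in W_0^{1,\vec p}(\Omega)$ be two weak solutions and set $w:=u_1-u_2$. I plan to subtract the two weak formulations, test with the truncation $T_h(w)$ for $h>0$ small, and then pass to the limit $h\to 0^+$ after dividing by $h$. The geometric point is that on $\{|w|\le h\}$ the non-Lipschitz factor $|w|^{\bar p/p_i'}$ is pointwise dominated by $h^{\bar p/p_i'}$ and vanishes on $\{w=0\}$; the hypothesis $2\bar p/p_i'\ge 1$ then keeps the resulting error under control after division by $h$, while the strictly monotone term $\mathcal G$ supplies the nonnegative zeroth-order contribution that eventually forces $u_1=u_2$.

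\textbf{The energy inequality.} Using \eqref{monotonia forte}, the sign of the $\mathcal G$-term, and \eqref{cond B}, the tested identity gives
\[
\widetilde\alpha\,E(h)+M(h)\le\sum_{i=1}^N\int_{\{|w|\le h\}}\widetilde b_i(x)|w|^{\bar p/p_i'}|\de_{x_i}w|\,dx,
\]
with $E(h):=\sum_i\int_{\{|w|\le h\}}(|\de_{x_i}u_1|+|\de_{x_i}u_2|)^{p_i-2}|\de_{x_i}w|^2\,dx$ and $M(h):=\int_\Omega[\mathcal G(x,u_1)-\mathcal G(x,u_2)]T_h(w)\,dx\ge 0$. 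Splitting the right-hand integrand as the product of $(|\de_{x_i}u_1|+|\de_{x_i}u_2|)^{(p_i-2)/2}|\de_{x_i}w|$ and $\widetilde b_i|w|^{\bar p/p_i'}(|\de_{x_i}u_1|+|\de_{x_i}u_2|)^{(2-p_i)/2}$, Cauchy--Schwarz followed by Young absorbs half of $E(h)$ into the left-hand side, and bounding $|w|^{2\bar p/p_i'}\le h^{2\bar p/p_i'}$ on $\{|w|\le h\}$ (together with the fact that this factor vanishes on $\{w=0\}$) yields
\[
\tfrac{\widetilde\alpha}{2}E(h)+M(h)\le\frac{1}{2\widetilde\alpha}\sum_{i=1}^N h^{2\bar p/p_i'}\widetilde\omega_i(h),\qquad \widetilde\omega_i(h):=\int_{\{0<|w|\le h\}}\widetilde b_i^{2}(|\de_{x_i}u_1|+|\de_{x_i}u_2|)^{2-p_i}\,dx.
\]

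\textbf{Passing to the limit.} The crucial analytic step is to show $\widetilde\omega_i(h)\to 0$ as $h\to 0^+$. A Hölder computation combining $\widetilde b_i\in L^{Np_i'/\bar p,\infty}(\Omega)$ from \eqref{riBIS}, a small Lorentz-to-Lebesgue embedding on the bounded domain $\Omega$, and $|\de_{x_i}u_j|\in L^{p_i}(\Omega)$ shows $\widetilde b_i^{2}(|\de_{x_i}u_j|)^{2-p_i}\in L^{1}(\Omega)$ -- here $\bar p<N$ is essential, as it makes the conjugate exponents fit with room to spare. Since $|\{0<|w|\le h\}|\to 0$, absolute continuity of the integral gives $\widetilde\omega_i(h)\to 0$. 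Dividing the inequality by $h$ yields
\[
\frac{\widetilde\alpha}{2h}E(h)+\frac{M(h)}{h}\le\frac{1}{2\widetilde\alpha}\sum_{i=1}^N h^{2\bar p/p_i'-1}\widetilde\omega_i(h),
\]
and the hypothesis $2\min_i\{\bar p/p_i'\}\ge 1$ keeps $h^{2\bar p/p_i'-1}$ bounded for $h\le 1$, so the right-hand side vanishes. Since $T_h(w)/h\to\mathrm{sgn}(w)$ pointwise and is bounded by $1$, while $\mathcal G(x,u_j)\in L^{1}(\Omega)$ by \eqref{b1} and the Sobolev embedding, dominated convergence gives $M(h)/h\to\int_\Omega[\mathcal G(x,u_1)-\mathcal G(x,u_2)]\mathrm{sgn}(u_1-u_2)\,dx$. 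Both terms on the left are nonnegative, so this limit is zero; but the integrand is pointwise nonnegative by monotonicity of $\mathcal G$, hence it vanishes a.e., and the strict monotonicity \eqref{cond G} forces $u_1=u_2$ a.e.

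\textbf{Main obstacle.} The main technical difficulty is the Lorentz--Lebesgue Hölder step ensuring the integrability of $\widetilde b_i^{2}(|\de_{x_i}u_j|)^{2-p_i}$, which depends critically on the strict inequality $\bar p<N$ to make the conjugate exponents fit. The role of $2\bar p/p_i'\ge 1$ is equally unavoidable: it prevents $h^{2\bar p/p_i'-1}$ from blowing up in the final division, so that the cancellation coming from $\widetilde\omega_i(h)\to 0$ can be genuinely exploited.
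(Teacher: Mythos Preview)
Your proof is correct and follows the same overall strategy as the paper: subtract the two weak formulations, test with an approximation of $\mathrm{sgn}(u_1-u_2)$, use the strong monotonicity \eqref{monotonia forte} together with Young's inequality to absorb the gradient term, show that $\widetilde b_i^{\,2}(|\de_{x_i}u_1|+|\de_{x_i}u_2|)^{2-p_i}\in L^1(\Omega)$ (the paper does exactly the same H\"older computation, using $\bar p<N$ to embed $L^{Np_i'/\bar p,\infty}\hookrightarrow L^{p_i'}$), so that the error term vanishes in the limit; then the strict monotonicity \eqref{cond G} of $\mathcal G$ forces $u_1=u_2$.

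The difference is in the choice of test function. The paper, following an idea of Carrillo--Chipot, works one-sidedly with $w=(u-v)^+$ and tests with $\varphi\,\Psi_\varepsilon(w)$, where $\Psi_\varepsilon$ is constructed so that $\Psi_\varepsilon'(t)=t^{-2r_{\min}}$ on $(\delta(\varepsilon),\varepsilon)$ (here $r_i=\bar p/p_i'$); this singular weight absorbs the H\"older factor $|w|^{2r_i}$ directly, leaving $\varepsilon^{2(r_i-r_{\min})}$ times an integral over $\{\delta(\varepsilon)<w<\varepsilon\}$ that shrinks to zero. The hypothesis $2r_{\min}\ge1$ is precisely what makes $\Psi_\varepsilon$ well-defined, since it forces $\int_0^\varepsilon t^{-2r_{\min}}\,dt=\infty$. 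You instead test with the plain truncation $T_h(w)$ and divide by $h$ at the end; the same hypothesis then reappears as the condition that keeps $h^{2r_i-1}$ bounded. Your route is arguably more elementary---no auxiliary $\varphi$, no singular test function, and both signs of $w$ handled at once---while the paper's $\Psi_\varepsilon$ device is more flexible if one later wants a localized version (via the free choice of $\varphi$) or a one-sided comparison principle.
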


\medskip

\begin{proof}
We generalize the ideas contained in   \cite{CC} where a linear isotropic operator is considered.

Let $u$ and $v$ be two weak solutions to problem (\ref{P}). Let us denote
$w=\left(  u-v\right)  ^{+}$ and  $D=\left\{  x\in\Omega:w>0\right\}$. Denoting $r_{\min}=\min_i \{\frac{\bar p}{p_i'}\}$ and recalling that  $2r_{\min}\geq1$, for every $\varepsilon>0$ there exists $\delta(\varepsilon)$ such that
$$\int_{ \delta(\varepsilon)}^\varepsilon\frac{1}{s^{2r_{\min}}}ds=1.$$
Let us define
\begin{equation}\label{Psi}
\Psi^\sigma_\varepsilon(t)=
\left\{
\begin{array}[c]{ll}%
0 &\hbox { se } t\leq\delta(\varepsilon),
\\ \int_{ \delta(\varepsilon)}^t\frac{1}{s^{\sigma}}ds   & \hbox{ se } \delta(\varepsilon)<t< \varepsilon
 \\1  & \hbox{ se } t\geq \varepsilon.
\end{array}
\right.
\end{equation}
We take $\Psi_\varepsilon(t)=\Psi^\sigma_\varepsilon$ with $\sigma=2r_{\min}$. We stress that $\Psi_\varepsilon(t)$ is Lipschitz continuous, $\Psi'_\varepsilon(t)\geq0$ and $\varphi\Psi_\varepsilon(w)\in W_0^{1,\overrightarrow{p}}(\Omega)$ taking $\varphi\geq0$ and $\varphi \in W^{1,\overrightarrow{p}}(\Omega)\cap L^\infty(\Omega)$.

 Supposing that $D$ has positive measure, we use $\varphi\Psi_\varepsilon(w)$ as test function in the difference of the equations. It follows
\begin{align*}
I_\varepsilon:=&
\overset{N}{\underset{i=1}{\sum}} \int_{\Omega}
\left\{
\left[  \mathcal{A}_{i}\left(  x,\nabla
u\right)  -\mathcal{A}_{i}\left(  x,\nabla v\right)  \right] +
\left[  \mathcal{B}_{i}\left(  x,u\right)  -\mathcal{B}_{i}\left(  x, v\right)  \right] \right\} \partial_{x_{i}}%
\varphi \Psi_\varepsilon(w)\,dx\\
&+\int_{\Omega}\left[\mathcal{G}(x,u)-\mathcal{G}(x,v)\right]\varphi \Psi_\varepsilon(w) \,dx=
\\
&-
\overset{N}{\underset{i=1}{\sum}}\int_{\Omega}
\left\{
\left[  \mathcal{A}_{i}\left(  x,\nabla
u\right)  -\mathcal{A}_{i}\left(  x,\nabla v\right)  \right] + \left[  \mathcal{B}_{i}\left(  x,u\right)  -\mathcal{B}_{i}\left(  x, v\right)  \right]\right\}  \partial_{x_{i}}w
\Psi'_\varepsilon(w)\varphi\,dx.
\end{align*}
By (\ref{monotonia forte}) and (\ref{cond B}) we get
\begin{align*}
-\overset{N}{\underset{i=1}{\sum}}
\int_{\Omega}&
\left\{
\left[  \mathcal{A}_{i}\left(  x,\nabla
u\right)  -\mathcal{A}_{i}\left(  x,\nabla v\right)  \right]  \partial_{x_{i}}w
\Psi'_\varepsilon(w)\varphi+
\left[  \mathcal{B}_{i}\left(  x,u\right)  -\mathcal{B}_{i}\left(  x, v\right)  \right]
\right\}
 \partial_{x_{i}}w
\Psi'_\varepsilon(w)\varphi\, dx
\\&
\leq-\overset{N}{\underset{i=1}{\sum}}\int_{\Omega}
\widetilde{\alpha}\frac{|\partial_{x_{i}}w|^2}{(|\partial_{x_{i}}u|+ |\partial_{x_{i}}v|)^{2-p_i}}
\Psi'_\varepsilon(w)\varphi\, dx+\overset{N}{\underset{i=1}{\sum}}\int_{\Omega}
\widetilde{b}_i(x) w^{r_i} | \partial_{x_{i}}w|
\Psi'_\varepsilon(w)\varphi\, dx,
\end{align*}
where $r_i=\frac{\bar p}{p'_i}$.  Using Young inequality with $\theta<\widetilde{\alpha}$  we obtain

\begin{align*}
-\overset{N}{\underset{i=1}{\sum}}
\int_{\Omega}&
\left\{
\left[  \mathcal{A}_{i}\left(  x,\nabla
u\right)  -\mathcal{A}_{i}\left(  x,\nabla v\right)  \right] +
\left[  \mathcal{B}_{i}\left(  x,u\right)  -\mathcal{B}_{i}\left(  x, v\right)  \right]\right\}  \partial_{x_{i}}w
\Psi'_\varepsilon(w)\varphi\, dx
\\
\leq&-\overset{N}{\underset{i=1}{\sum}}\int_{\Omega}
(\widetilde{\alpha}-\theta)
\frac{|\partial_{x_{i}}w|^2}{(|\partial_{x_{i}}u|+ |\partial_{x_{i}}v|)^{2-p_i}}
\Psi'_\varepsilon(w)\varphi \, dx+\\
& +C(\theta)\overset{N}{\underset{i=1}{\sum}}\int_{\Omega}
\widetilde{b}^2_i(x) w^{2r_i} (|\partial_{x_{i}}u|+ |\partial_{x_{i}}v|)^{2-p_i}
\Psi'_\varepsilon(w)\varphi\, dx
\\
&\leq C(\theta)\|\varphi\|_\infty\overset{N}{\underset{i=1}{\sum}}\varepsilon^{2(r_i-r_{\min})}
\int_{\{\delta(\varepsilon)<w<\varepsilon\}}
\widetilde{b}^2_i(x)(|\partial_{x_{i}}u|+ |\partial_{x_{i}}v|)^{2-p_i}\, dx= J_\varepsilon.
\end{align*}
We underline that $\widetilde{b}^2_i(x)(|\partial_{x_{i}}u|+ |\partial_{x_{i}}v|)^{2-p_i}\in L^1(\Omega)$. Indeed we have
\begin{equation*}
\begin{split}
\int_{\Omega}\widetilde{b}^2_i(x)(|\partial_{x_{i}}u|+ |\partial_{x_{i}}v|)^{2-p_i}\, dx&\leq
\left(\int_{\Omega}\widetilde{b}_i^{p'_i}(x) \,dx\right)^{\frac{2}{p'_i}}
\left(\int_{\Omega} (|\partial_{x_{i}}u|+ |\partial_{x_{i}}v|)^{p_i} \,dx\right)^{\frac{2-p_i}{p_i}}\\
&\leq C(\Omega,p_i)\|\widetilde{b}_i\|^{p'_i}_{ L^{\frac{N\,p'_i} {\bar p},\infty}(\Om)}
\left(\int_{\Omega} (|\partial_{x_{i}}u|+ |\partial_{x_{i}}v|)^{p_i} \,dx\right)^{\frac{2-p_i}{p_i}}.
\end{split}
\end{equation*}
Since $(r_i-r_{\min})\geq0$, letting $\varepsilon\rightarrow0$ we get $J_\varepsilon\rightarrow0$. Moreover $\Psi_\varepsilon(w)\rightarrow \chi_{\{u-v>0\}}$, where $\chi_{\{u-v>0\}}$ denotes the characteristic function of the set $\{u-v>0\}$. By the Lebesgue dominated convergence Theorem we obtain
\begin{equation}\label{E1}
\overset{N}{\underset{i=1}{\sum}}\int_{\{u-v>0\}}
\left\{
\left[  \mathcal{A}_{i}\left(  x,\nabla
u\right)  -\mathcal{A}_{i}\left(  x,\nabla v\right)  \right] +
\left[  \mathcal{B}_{i}\left(  x,u\right)  -\mathcal{B}_{i}\left(  x, v\right)  \right]\right\}  \partial_{x_{i}}%
\varphi
+\left[\mathcal{G}(x,u)-\mathcal{G}(x,v)\right]\varphi \,dx\leq0.
\end{equation}
Taking $\varphi=1$ in \eqref{E1} it follows
$$\int_{\{u-v>0\}}\left[\mathcal{G}(x,u)-\mathcal{G}(x,v)\right] \,dx\leq0.$$
Assumption \eqref{cond G} allows us to conclude that ${\{u-v>0\}}$ ha zero measure. Exchanging $u$ for $v$ we conclude.
\end{proof}

\medskip

\begin{remark}
\begin{itemize}
\item [i)]  If $3/2\leq p_i\leq2$ for all $i$ then $2\min_i\{\frac{\bar p}{p'_i}\}\geq1$.

\item [ii)] Obviously when all $p_i=2$ the uniqueness results holds even $\mathcal{G}\equiv0$, because we have Lipschitz dependence on $u$.

\item [iii)] Theorem \ref{THU1} holds replacing the condition $p_i\leq2$ for every $i=1,\cdots,N$ with $\frac{\bar p}{p'_i}\leq 1$ for every $i=1,\cdots,N$.
\end{itemize}
\end{remark}

\subsection{Second case: $p_i\geq2$ for every $i=1,\cdots,N$}
In this subsection we prove the uniqueness of weak solutions of problem \eqref{P} when all $p_i\geq 2$.
In the model cases \eqref{model 1} the term $B_i(x,u)=\beta_i(x)|u|^{\frac{\bar p}{p'_i}-1}u$ with $|\beta_i|\in L^{\frac{N p'_i}{\bar p},\infty}(\Omega)$  are locally Lipschitz continuous with respect to solution $u$, but it is well-known that even in the isotropic case when $p>2$ the uniqueness can fail (see for example in \cite{ABM} at the end of Section 2). Again to prove uniqueness we will strongly use the presence of zero order term.
We assume
\begin{equation}
\left(  \mathcal{\mathcal{A}}_{i}\left(  x,\xi\right)  -\mathcal{A}_{i}\left(  x,\xi^{\prime}\right)
\right)  \left(  \xi_{i}-\xi_{i}^{\prime}\right)  \geq\widehat{\alpha}\left\vert \xi_{i}-\xi_{i}^{\prime}\right\vert
^{p_{i}} \label{monotonia forte_bis}%
\end{equation}
with $\widehat{\alpha}>0$ and
\begin{equation}\label{cond B_bis}
\left\vert \mathcal{B}_{i}\left(  x,s\right)  -\mathcal{B}_{i}\left(  x,s^{\prime}\right)
\right\vert \leq \widehat{b}_i(x)
\left\vert s-s^{\prime}\right\vert
\left(|s|+|s^\prime|+\zeta\right)^{\frac{\bar p}{p'_i}-1}
\end{equation}
with $ \widehat{b}_i(x)$ defined as in \eqref{riBIS} and $\zeta\geq0$.
We stress that $\frac{\bar p}{p'_i}\geq1$ under our assumptions. Moreover $\mathcal{A}_{i}\left(  x,\xi\right)=\alpha |\xi_i|^{p_i-2}\xi_i$ verifies \eqref{monotonia forte_bis} and $\mathcal{B}_{i}\left(  x,s\right) =\beta_i(x)|s|^{\frac{\bar p}{p_i'}-1}s$  verifies \eqref{cond B}, recalling that $p_i'\leq2\leq\bar p$.

\medskip

\begin{theorem}\label{THU2}
Let $u\in W_0^{1,\overrightarrow{p}}(\Omega)$ be a weak solution to problem (\ref{P}) under assumptions ($\mathcal{H}1$)-($\mathcal{H}4$)  with $p_{i}\geq2$ for $i=1,..,N$, $\bar p<N$ and $\mathcal{G}\not \equiv 0$.  If (\ref{cond G}), (\ref{monotonia forte_bis}) and (\ref{cond B_bis}) are in force, then $u$ is the unique weak solution to problem \eqref{P}.
\end{theorem}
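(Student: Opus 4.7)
The plan is to adapt the strategy of Theorem \ref{THU1} to the superquadratic range, using the stronger monotonicity \eqref{monotonia forte_bis} and the locally Lipschitz estimate \eqref{cond B_bis} in place of the degenerate monotonicity and Hölder control used there. Let $u$ and $v$ be two weak solutions to \eqref{P}, set $w=(u-v)^+$ and $D=\{w>0\}$, and argue by contradiction assuming $|D|>0$. Since $p_i\ge 2$ yields $1<p_i'\le 2$, the number $\sigma=\min_i p_i'$ satisfies $\sigma>1$, so the function $\Psi_\varepsilon:=\Psi^\sigma_\varepsilon$ defined in \eqref{Psi} is well posed, Lipschitz, non-decreasing, and $\Psi_\varepsilon(w)\to\chi_D$ pointwise as $\varepsilon\to 0$. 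For any non-negative $\varphi\in W^{1,\vec p}(\Omega)\cap L^\infty(\Omega)$, the function $\varphi\Psi_\varepsilon(w)$ belongs to $W_0^{1,\vec p}(\Omega)$ (it vanishes in a neighborhood of $\{w=0\}$, hence near $\partial\Omega$), so we test the difference of the two weak formulations with it.

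Splitting $\partial_{x_i}(\varphi\Psi_\varepsilon(w))=\Psi_\varepsilon(w)\partial_{x_i}\varphi+\varphi\Psi_\varepsilon'(w)\partial_{x_i}w$ and moving the $\Psi_\varepsilon'$ pieces to the right, denote by $I_\varepsilon$ the sum of the three integrals containing $\Psi_\varepsilon(w)\partial_{x_i}\varphi$ and $\Psi_\varepsilon(w)\varphi$. On the right, strong monotonicity \eqref{monotonia forte_bis} applied on $D$ (where $\partial_{x_i}w=\partial_{x_i}u-\partial_{x_i}v$) produces $-\widehat\alpha\sum_i\int|\partial_{x_i}w|^{p_i}\varphi\Psi_\varepsilon'(w)\,dx$, while \eqref{cond B_bis} bounds the $\mathcal B_i$ contribution by $\sum_i\int\widehat b_i\,w\,(|u|+|v|+\zeta)^{\bar p/p_i'-1}|\partial_{x_i}w|\,\varphi\Psi_\varepsilon'(w)\,dx$. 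A Young inequality with exponents $(p_i,p_i')$ and constant $\theta<\widehat\alpha$ absorbs the small piece into the monotone term and leaves
\begin{equation*}
J_\varepsilon=C(\theta)\,\|\varphi\|_\infty\sum_{i=1}^N\int_{\{\delta(\varepsilon)<w<\varepsilon\}}\widehat b_i(x)^{p_i'}\,w^{p_i'-\sigma}\,(|u|+|v|+\zeta)^{\bar p-p_i'}\,dx.
\end{equation*}
By construction $w^{p_i'-\sigma}\le\varepsilon^{p_i'-\sigma}$ on $\{\delta(\varepsilon)<w<\varepsilon\}$, and this factor is uniformly bounded in $\varepsilon$ (indeed it tends to $0$ whenever $p_i'>\sigma$).

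The core technical step is the uniform integrability of $\widehat b_i(x)^{p_i'}(|u|+|v|+\zeta)^{\bar p-p_i'}$, which uses $\bar p\ge 2\ge p_i'$ (a consequence of $p_i\ge 2$ and the harmonic-mean definition of $\bar p$) so that the exponent $\bar p-p_i'$ is non-negative. Since $\widehat b_i^{p_i'}\in L^{N/\bar p,\infty}(\Omega)$ by \eqref{riBIS} and the Sobolev embedding $W_0^{1,\vec p}(\Omega)\hookrightarrow L^{\bar p^*,\bar p}(\Omega)$ gives $(|u|+|v|+\zeta)^{\bar p-p_i'}\in L^{\bar p^*/(\bar p-p_i'),\,\bar p/(\bar p-p_i')}(\Omega)$, the generalized Hölder inequality in Lorentz spaces yields a finite global bound $\int_\Omega \widehat b_i^{p_i'}(|u|+|v|+\zeta)^{\bar p-p_i'}\,dx<\infty$. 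Because $|\{\delta(\varepsilon)<w<\varepsilon\}|\to 0$ as $\varepsilon\to 0$, absolute continuity of the Lebesgue integral forces $J_\varepsilon\to 0$.

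On the other hand, dominated convergence (using \eqref{ii1}, \eqref{b}, \eqref{b1} together with $u,v\in W_0^{1,\vec p}(\Omega)$ and $\varphi,\partial_{x_i}\varphi$ in the appropriate spaces) gives
\begin{equation*}
I_\varepsilon\to\int_D\sum_{i=1}^N\bigl\{[\mathcal A_i(x,\nabla u)-\mathcal A_i(x,\nabla v)]+[\mathcal B_i(x,u)-\mathcal B_i(x,v)]\bigr\}\partial_{x_i}\varphi\,dx+\int_D[\mathcal G(x,u)-\mathcal G(x,v)]\varphi\,dx\le 0.
\end{equation*}
Choosing $\varphi\equiv 1$ kills the gradient terms and leaves $\int_D[\mathcal G(x,u)-\mathcal G(x,v)]\,dx\le 0$, contradicting \eqref{cond G} unless $|D|=0$. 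Exchanging the roles of $u$ and $v$ gives $|\{v>u\}|=0$ as well, whence $u=v$ a.e. The main obstacle is the third paragraph: one must simultaneously pick the right exponent $\sigma=\min_i p_i'$ (so that $w^{p_i'-\sigma}$ stays bounded and the cutoff $\Psi_\varepsilon$ is admissible) and verify the Hölder-type integrability of $\widehat b_i^{p_i'}(|u|+|v|+\zeta)^{\bar p-p_i'}$ in the correct Lorentz scale.
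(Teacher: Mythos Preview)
Your argument is correct and follows essentially the same route as the paper's own proof: the same test function $\varphi\Psi^\sigma_\varepsilon(w)$ with $\sigma=\min_i p_i'$, the same use of \eqref{monotonia forte_bis} and \eqref{cond B_bis} followed by Young's inequality with exponents $(p_i,p_i')$, and the same Lorentz--H\"older verification that $\widehat b_i^{\,p_i'}(|u|+|v|+\zeta)^{\bar p-p_i'}\in L^1(\Omega)$ so that $J_\varepsilon\to 0$ on the shrinking sets. The only cosmetic difference is that the paper pulls out the factor $\varepsilon^{p_i'-\min_i p_i'}$ immediately, whereas you keep $w^{p_i'-\sigma}$ inside the integral and bound it afterwards; and your justification that $\varphi\Psi_\varepsilon(w)\in W_0^{1,\vec p}(\Omega)$ via ``vanishing in a neighborhood of $\{w=0\}$'' is a bit informal (the clean reason is that $\Psi_\varepsilon$ is Lipschitz with $\Psi_\varepsilon(0)=0$ and $w\in W_0^{1,\vec p}(\Omega)$), but the conclusion is the same.
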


\medskip

\begin{proof}
The idea of the proof follows the previous theorem. Let $u$ and $v$ be two weak solutions to problem (\ref{P}). Let us denote
$w=\left(  u-v\right)  ^{+}$ and  $D=\left\{  x\in\Omega:w>0\right\}$.

Let $\Psi_\varepsilon(t)=\Psi^\sigma_\varepsilon(t)$  with $\sigma=\min_i {p'_i}$ defined in \eqref{Psi}.
Supposing that $D$ has positive measure, we use $\varphi\Psi_\varepsilon(w)$ as test function in the difference of the equations. By
(\ref{monotonia forte_bis}) and (\ref{cond B_bis}) we get
\begin{align*}
-\overset{N}{\underset{i=1}{\sum}}
\int_{\Omega}&
\left\{
\left[  \mathcal{A}_{i}\left(  x,\nabla
u\right)  -\mathcal{A}_{i}\left(  x,\nabla v\right)  \right]  \partial_{x_{i}}w
\Psi'_\varepsilon(w)\varphi+
\left[  \mathcal{B}_{i}\left(  x,u\right)  -\mathcal{B}_{i}\left(  x, v\right)  \right]
\right\}
 \partial_{x_{i}}w
\Psi'_\varepsilon(w)\varphi\, dx
\\&
\leq-\overset{N}{\underset{i=1}{\sum}}\int_{\Omega}
\widehat{\alpha}|\partial_{x_{i}}w|^{p_i}
\Psi'_\varepsilon(w)\varphi\, dx+\overset{N}{\underset{i=1}{\sum}}\int_{\Omega}
\widehat{b}_i w \left(|u|+|v|+\zeta\right)^{r_i-1} | \partial_{x_{i}}w|
\Psi'_\varepsilon(w)\varphi\, dx,
\end{align*}
where we put $r_i=\frac{\bar p}{p'_i}$.
Using Young inequality with $\theta<\widehat{\alpha}$ we obtain
\begin{align*}
-\overset{N}{\underset{i=1}{\sum}}
\int_{\Omega}&
\left\{
\left[  \mathcal{A}_{i}\left(  x,\nabla
u\right)  -\mathcal{A}_{i}\left(  x,\nabla v\right)  \right] +
\left[  \mathcal{B}_{i}\left(  x,u\right)  -\mathcal{B}_{i}\left(  x, v\right)  \right]\right\}  \partial_{x_{i}}w
\Psi'_\varepsilon(w)\varphi\, dx
\\
\leq&-\overset{N}{\underset{i=1}{\sum}}\int_{\Omega}
(\widehat{\alpha}-\theta)
|\partial_{x_{i}}w|^{p_i}
\Psi'_\varepsilon(w)\varphi \, dx+\\
& +C(\theta)\overset{N}{\underset{i=1}{\sum}}\int_{\Omega}
\widehat{b}_i^{p'_i} w^{p'_i} \left(|u|+|v|+\zeta\right)^{(r_i-1)p'_i}
\Psi'_\varepsilon(w)\varphi\, dx
\\
&\leq C(\theta)\|\varphi\|_\infty\overset{N}{\underset{i=1}{\sum}}\varepsilon^{(p'_i-\min_i p'_i)}
\int_{\{\delta(\varepsilon)<w<\varepsilon\}}
\widehat{b}_i^{p'_i}\left(|u|+|v|+\zeta\right)^{(\bar p-p'_i )}\, dx= J_\varepsilon.
\end{align*}
We underline that $\widehat{b}^{p'_i}_i \left(|u|+|v|+\zeta\right)^{(\bar p-p'_i )}\in L^1(\Omega)$. Indeed  we have
\begin{equation*}
\begin{split}
\int_{\Omega}
\widehat{b}_i^{p'_i}\left(|u|+|v|+\zeta\right)^{(r_i-1)p'_i}\, dx
&\leq C\left(\|\widehat{b}_i\|^{^{p'_i}}_{ L^{\frac{N\,p'_i} {\bar p},\infty}(\Om)}
\||u|+|v|\|^{\bar p-p'_i}_{L^{\frac{N(\bar p-p'_i)}{N-\bar p},\bar p-p'_i}(\Omega)}
+  \|\widehat{b}_i\|^{p'_i}_{ L^{p'_i} (\Om)}\right).
\end{split}
\end{equation*}
The right-hand side is finite, since $u,v\in L^{\frac{N(\bar p-p'_i)}{N-\bar p},\bar p-p'_i}(\Omega)$ and $L^{\bar p^*,\bar p}(\Omega)\subset L^{\frac{N(\bar p-p'_i)}{N-\bar p},\bar p-p'_i}(\Omega)$. We conclude as in Theorem \ref{THU1} letting $\varepsilon\rightarrow0$.
\end{proof}

\medskip

\begin{remark}
Theorem \ref{THU1} holds replacing the condition $p_i\geq2$ for every $i=1,\cdots,N$ with $\frac{\bar p}{p'_i}\geq 1$ for every $i=1,\cdots,N$.
\end{remark}

\subsection{The mixed case:  when $\min_i p_i\leq2$ and $\max_i p_i>2$.}
We end the section studying the mixed case when $\min_i p_i\leq2$ and $\max_i p_i>2$. For simplicity of presentation we  take into account the model problem \eqref{model 1} when $\min_i p_i\leq2$ and $\max_i p_i\geq2$.

\begin{theorem}\label{THU3}
Let $u\in W_0^{1,\overrightarrow{p}}(\Omega)$ be a weak solution to problem \eqref{model 1}  with $p_{i}>1$ for $i=1,..,N$, $\min_i p_i\leq2$ and $\max_i p_i>2$,$\bar{p}<N$, $\widetilde{\mu}>0$ and $\beta_i\in L^{\frac{Np'_i}{\bar p},\infty}(\Omega)$. If $2\min_i\{\frac{\bar p}{p'_i}\}\geq1$, then $u$ is the unique weak solution to problem \eqref{model 1}.
\end{theorem}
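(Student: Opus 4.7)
My plan is to combine the test-function strategies of Theorems \ref{THU1} and \ref{THU2}, applying to each index the monotonicity/H\"older pair matched to its own $p_i$. Let $u,v\in W^{1,\vec p}_0(\Omega)$ be two weak solutions of \eqref{model 1}, set $w=(u-v)^+$, and suppose toward a contradiction that $D=\{w>0\}$ has positive measure. Partition the indices as $I_{\leq}=\{i:p_i\leq 2\}$ and $I_{>}=\{i:p_i>2\}$, and write $r_i=\bar p/p'_i$. The hypothesis $2\min_i r_i\geq 1$ yields $2r_i\geq 1$ for $i\in I_{\leq}$, while $p_i>2$ automatically gives $p'_i>1$; accordingly I would set
$$\sigma := \min\Bigl\{\min_{i\in I_{\leq}} 2r_i,\ \min_{i\in I_{>}} p'_i\Bigr\}\in [1,2],$$
so that the auxiliary function $\Psi_\varepsilon:=\Psi^\sigma_\varepsilon$ from \eqref{Psi} is well defined with $\delta(\varepsilon)\to 0^+$. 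I would then test the difference of the two equations for $u,v$ by $\varphi\Psi_\varepsilon(w)$, for an arbitrary nonnegative $\varphi\in W^{1,\vec p}(\Omega)\cap L^\infty(\Omega)$.

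Next I would estimate the two groups of indices separately. For $i\in I_{\leq}$ I invoke the strong monotonicity $(|\xi_i|^{p_i-2}\xi_i-|\eta_i|^{p_i-2}\eta_i)(\xi_i-\eta_i)\geq c(|\xi_i|+|\eta_i|)^{p_i-2}|\xi_i-\eta_i|^2$ together with the H\"older-type bound $\bigl||s|^{r_i-1}s-|t|^{r_i-1}t\bigr|\leq C|s-t|^{r_i}$ (valid whenever $r_i\leq 1$; the sub-case $r_i>1$ would instead use the Lipschitz bound on $\mathcal{B}_i$ with the same Young exponents), then Young's inequality with exponents $(2,2)$, exactly as in Theorem \ref{THU1}. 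For $i\in I_{>}$ I apply the $p_i$-monotonicity $\geq c|\xi_i-\eta_i|^{p_i}$ and the locally Lipschitz estimate $\bigl||s|^{r_i-1}s-|t|^{r_i-1}t\bigr|\leq C|s-t|(|s|+|t|)^{r_i-1}$, followed by Young's inequality with exponents $(p_i,p'_i)$ as in Theorem \ref{THU2}. After absorbing the monotone contributions and using $\Psi'_\varepsilon(w)=w^{-\sigma}$ on $\{\delta<w<\varepsilon\}$, the residuals reduce to expressions of the form
$$\varepsilon^{2r_i-\sigma}\!\int_{\{\delta<w<\varepsilon\}}\!\beta_i^2(|\partial_{x_i}u|+|\partial_{x_i}v|)^{2-p_i}\,dx\quad(i\in I_{\leq}),\qquad \varepsilon^{p'_i-\sigma}\!\int_{\{\delta<w<\varepsilon\}}\!\beta_i^{p'_i}(|u|+|v|+1)^{(r_i-1)p'_i}\,dx\quad(i\in I_{>}),$$
whose integrands are in $L^1(\Omega)$ by the Marcinkiewicz/Lorentz H\"older estimates already used in Theorems \ref{THU1} and \ref{THU2}. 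By the choice of $\sigma$ every $\varepsilon$-exponent is nonnegative, and since $|\{\delta(\varepsilon)<w<\varepsilon\}|\to 0$ each residual vanishes as $\varepsilon\to 0^+$.

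Passing to the limit by dominated convergence, with $\Psi_\varepsilon(w)\to\chi_{\{u>v\}}$ pointwise, and then specializing to $\varphi\equiv 1$, the leftover inequality collapses to
$$\int_{\{u>v\}}\widetilde\mu\bigl(|u|^{\gamma-1}u-|v|^{\gamma-1}v\bigr)\,dx\leq 0;$$
strict monotonicity of $s\mapsto|s|^{\gamma-1}s$ together with $\widetilde\mu>0$ forces $|\{u>v\}|=0$, and swapping $u$ and $v$ yields $u=v$ a.e. The main obstacle I anticipate is the simultaneous choice of the single exponent $\sigma$: one has to verify that it meets the two distinct admissibility windows coming from $I_{\leq}$ and $I_{>}$, and that the residuals still vanish in the borderline case where the minimum defining $\sigma$ is attained (giving $\varepsilon^{0}$, so that vanishing relies entirely on $|\{\delta<w<\varepsilon\}|\to 0$ together with the $L^1$ integrability of the integrand). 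This is precisely where the hypothesis $2\min_i r_i\geq 1$ intervenes---it keeps $\sigma\geq 1$, so that the construction of $\Psi_\varepsilon$ remains valid.
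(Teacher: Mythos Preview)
Your proposal is correct and takes essentially the same approach as the paper: partition the indices according to $p_i\leq 2$ versus $p_i>2$, apply to each group the matching monotonicity/H\"older pair from Theorems \ref{THU1} and \ref{THU2}, and test with $\varphi\Psi^\sigma_\varepsilon(w)$ where $\sigma=\min\{\min_{I_\leq}2r_i,\min_{I_>}p'_i\}$, which coincides with the paper's choice $\sigma=\min_i\{p'_i,\,2\bar p/p'_i\}$. Your treatment is in fact more detailed than the paper's brief sketch, in particular your remarks on the borderline case $\varepsilon^0$ and on the sub-case $r_i>1$ for $i\in I_\leq$.
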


We stress that problem \eqref{model 1} verifies assumptions (\ref{monotonia forte}) and (\ref{cond B}) for $p_i\leq2$. Otherwise for $p_i>2$ conditions (\ref{monotonia forte_bis}) and (\ref{cond B_bis}) are fulfilled. Then we can blend the proof of Theorem \ref{THU1} and Theorem \ref{THU2} to prove Theorem \ref{THU3}. In particular $\Psi_\varepsilon(t)=\Psi^\sigma_\varepsilon(t)$  with $\sigma=\min_i\{p'_i,2\frac{\bar p}{p'_i}\}$ defined in \eqref{Psi}.

\section*{Acknowledgments}
The authors are partially supported by GNAMPA of the Italian INdAM (National Institute of High Mathematics).

\end{document}